\title{} \author{} \date{}
\numberwithin{equation}{section} %You may omit this line if you want numbering as (1)
\newcommand{\Cl}{\mathop{\mathrm {Cl}}\nolimits}
\newcommand{\CL}{\mathop{\mathrm {CL}\Gamma}\nolimits}
\newcommand{\Int}{\mathop{\mathrm {Int}}\nolimits}
 \newtheorem{thm}{Theorem}
 \newtheorem{cor}{Corollary}
 \newtheorem{prop}{Proposition}
 \theoremstyle{definition}
 \newtheorem{defn}{Definition}
  \newtheorem{ex}{Example}
 \theoremstyle{remark}
  \newtheorem*{que}{Question}
\begin{document}
\thispagestyle{empty}
% PRAVLJENJE NASLOVA
\begin{center}
{\large \bf  Various types of continuity and their interpretations in ideal topological spaces\footnote{This research was supported by the Science Fund of the Republic of Serbia, Grant No. 7750027: Set-theoretic, model-theoretic and Ramsey-theoretic phenomena in mathematical structures: similarity and diversity–SMART
}

} \vspace*{3mm}

{\bf Anika Njamcul\footnote{Department of Mathematics and Informatics, Faculty of Sciences, University of Novi Sad, Serbia, e-mail: \href{mailto:anika.njamcul@dmi.uns.ac.rs}{anika.njamcul@dmi.uns.ac.rs}}}
and
{\bf Aleksandar Pavlovi\'c\footnote{Department of Mathematics and Informatics, Faculty of Sciences, University of @Novi Sad, Serbia, e-mail: \href{mailto:apavlovic@dmi.uns.ac.rs}{apavlovic@dmi.uns.ac.rs}}}
\end{center}

\begin{abstract}

This paper is a continuation of work started in  \cite{njampavcont} on preserving continuity in ideal topological spaces. We will deal
with $\theta$-continuity and weak continuity and give their translations in ideal topological spaces. As  consequences of those results, we will prove that every $\theta$-continuous function is continuous if topologies are generated by $\theta$-open sets and  we will give an example of weakly continuous function which is not  $\tau_\theta$-continuous. This will complete the diagram of relations between continuous, $\tau_\theta$-continuous, $\theta$-continuous, weakly continuous and faintly continuous functions.
\\[2mm] {\it AMS Mathematics  Subject Classification $(2010)$}:
54A10, %Several topologies on one set (change of topology, comparison of topologies, lattices of topologies)
54A05,  %Topological spaces and generalizations (closure spaces, etc.)
54C05, %Continuous maps
54C08, %Weak and generalized continuity
54B99, %None of the above, but in this section
54E99 %None of the above, but in this section
\\[1mm] {\it Key words and phrases:} ideal topological space, local function, local closure function, $\theta$-open sets,
 $\theta$-closure,  continuity, $\theta$-continuous function, weakly continuous function, faintly continuous function

\end{abstract}

\section{Introduction}

Continuity is almost as old as general topology. Both notions are firstly mentioned by Fr\`echet, topological structure in 1906 \cite{Fre}, and  continuity in 1910 \cite{Frechet}. The importance of continuity in general topology is not needed to be explained here. Later, several modifications of continuity were defined. Some of them are $\theta$-continuity, weakly continuity, faintly continuity, almost-continuity, and many others.

 It is interesting that  $\theta$-continuity was defined before $\theta$-open sets. It was done by Fomin \cite{Fomin} in 1942. Later, after  Veli\v{c}ko  \cite{Veli} introduced $\theta$-open and $\theta$-closed sets, it turned out that those notions have some connections  with $\theta$-continuity.
Topology defined by $\theta$-open sets, the $\theta$-topology, was later studied by Hermann   \cite{Herrmann} and by Foroutan, Ganster and Steiner \cite{FGS}.
Weakly continuous function were  first mentioned by Levine in 1961 in \cite{Lev}. There he proved that weakly continuous function which is also weakly* continuous is continuous and vice versa.

During the history, some unintended overlapping occurred.
For example, closure continuity was introduced by Andrew and Whittlesy \cite{AnWh} in 1966. and it turned out that it is equivalent to $\theta$-continuity. Almost continuous mapping were presented the same year by Husain \cite{Husain}  and, by the same name, but with a  slightly different definition,  by Singal and Singal  \cite{SingSing}. Different forms of faintly continuous functions can be found in \cite{LongHerr} and \cite{NoiPopa}. Also, some weak forms of continuity were mentioned by Espelie and Joseph in \cite{SolvJos}.

%The study of $\theta$-closed sets was continued by Dickman and Porter \cite{DP} in 1975 proving that a $H$-closed set can not be a countable union of nowhere dense $\theta$-closed sets. Also they proved that the Martin's axiom is equivalent to the statement that every $H$-closed space with ccc is not a union of less than continuum many $\theta$-closed nowhere dense sets.

% In 2008, Kuyucu,  Noiri,  and \"{O}zkurt \cite{KNO}  investigated a connection between $\theta$-continuous mappings and  a class of mappings into ideal topological spaces, denoted by $w-I-$continunous mappings, introduced 2004 by A\c{c}ikg\"{o}z,  Noiri  and Y\"{u}ksel in \cite{ANY}. In 2018, Islam and Modak \cite{IM} further generalized the notion of local closure function, defining local semi-closure function.

 Kuratowski  was the first who considered ideals in general topology. In 1933   \cite{KURold, KUR} he defined  the local function, generalization of closure by an ideal.  About a decade later,    Vaidyanathaswamy continued the research on this topic in  \cite{Vaid}. Through the years ideal topological spaces became an interesting topic in topology, measure theory, etc. (see Freud \cite{Freud}, Scheinberg \cite{Sein}).
 %generalized Cantor-Bendixson theorem using ideal topological space.
%  applied ideals in the measure theory.
One of the most thorough papers on the local function and ideal topological spaces in general was written by Jankovi\'c and Hamlett \cite{JH} in 1990. This survey paper was used later as a basis for further research, mostly for studying modifications of the local function. Thus in 2013, Al-Omari and Noiri \cite{ON} introduced the local closure function as a generalization of $\theta$-closure in ideal topological spaces. In the same paper they mentioned two new topologies obtained from the starting topology using the local closure function.

 New variations of continuity were also defined in ideal topological spaces. Such examples can be found in the most recent works of Al-Omeri and Noiri  \cite{WadeiNoiri}, and of Powar,  Mishra and Bhadauria \cite{Powar_2021}. However, our work will consider some basic aspects of types of continuities and their natural interpretation in ideal topological spaces.

  %A\c{c}ikg\"{o}z,  Noiri  and Y\"{u}ksel wrote about $\omega$-I-continuous functions in \cite{ANY}, while in \cite{KNO} Noiri with  Kuyucu and \"{O}zkurt investigated more properties of these functions. % Many other types of continuity were defined since.

In Section 2 we will give basic definitions and notations. Also, we will give definitions of several topologies obtained in ideal topological spaces in which we will work. In Section 3 we will give definitions of continuity and its various types and present current state of results considering relations between those types of continuity presented as a diagram.  In the following two sections we will give results obtained as the continuation of the research started in  \cite{njampavcont} on preserving continuity in ideal topological spaces.  Section 4 is reserved for results concerning $\theta$-continuity and its consequences in ideal topological spaces. We will give a sufficient condition   for ideals in order to $\theta$-continuous function in topologies without ideals becomes continuous in $\sigma$, topology obtained by the local closure function.   At the end of this  section we will prove that $\theta$-continuity implies continuity in topologies consisting of $\theta$-open sets, $\tau_\theta$-topology, which will add a new arrow on the diagram. In Section 5 we will deal with weakly continuous functions and consequences in ideal topological spaces. A condition on ideals when weakly continuous functions becomes, in ideal topological spaces, a continuous between $\tau^*$ and $\sigma$ topologies, will be given.  As a direct consequence of those results is an already known result that weak continuity implies faint continuity. We will prove that,  in case when at least one of sets ( set originals or of images) is finite, weak continuity implies continuity in topology of $\theta$-open sets. Finally, we will give an infinite example of weakly continuous function which is not continuous in topology of $\theta$-open sets, proving that those two types are incomparable in general. This example will complete the diagram in the sense that no new arrows can be added.

\section{Basic definitions}

By  $\langle X, \tau\rangle$ we will  denote a  topological space,  $\tau(x)$ will be the family of open neighbourhoods at the point $x$.   Closure of the set $A$ will be written as  $\Cl_{\tau}(A)$  or, if it is clear, just by $\Cl({A})$. Similarly,  the  interior of $A$  will be denoted by $\Int(A)$ or $\Int_{\tau}(A)$.
%If a set $A$ is both open and closed, it is called \textbf{clopen}.
An important part of this paper will be dedicated to \textbf{$\theta$-topology}. This topology $\tau_{\theta}$ consists of all $\theta$-open sets: we say that a set $U$ is \textbf{$\theta$-open} if $$\forall x \in U ~  \exists V \in \tau(x)~ \Cl(V)\subseteq U.$$
$\Int_{\tau_\theta}(A)$ will denote the \textbf{interior in the topology of $\theta$-open sets}.  It is obvious that  $\tau_\theta \subseteq \tau$.
%=\bigcup \{U: U \subset A, U\mbox{ is }\theta\mbox{-open}\}$.
 $U$ is $\theta$-open if and only if $\Int_\theta(U)=U$.  Naturally, a set $A$ is $\theta$-closed if its complement $X \setminus A$ is $\theta$-open.

 \textbf{$\theta$-closure} $\Cl_\theta(A)$  is an operator in the starting topology. It is  defined by
$$\Cl_\theta(A)=\{x \in X: \Cl(U) \cap A \neq \emptyset\mbox{ for each }U \in \tau(x)\}.$$  A set $A$ is \textbf{$\theta$-closed} if and only if  it is equal to its $\theta$-closure.
It is important to notice that  $\theta$-closure of a set does not have to be  $\theta$-closed, but it is always a closed set. We have $\Cl (A) \subseteq \Cl_\theta(A)$, for each set $A$. In order to distinguish closure in $\tau_\theta$ from the operator $\Cl_\theta$, the prior will be denoted by $\Cl_{\tau_\theta}$.

% If $\langle X, \tau\rangle$ is regular space, then every open set is $\theta$-open and therefore $\tau=\tau_\theta$. More about $\theta$-open amd $\theta$-closed sets cab be found in \cite{theta} ????????

 We will use small Greek letters $\alpha, \beta, \gamma, \ldots, \omega, \ldots$ to denote ordinals. The family of all ordinals is denoted by $ON$. Letters  $\lambda$ and $\kappa$ will be used for cardinals, while $\aleph_0$  is  the first infinite cardinal.

 %For the first infinite cardinal we will write $\aleph_0$ and the set of naturals (including 0) will be $\omega$.   For a set $A$ and a cardinal $\kappa$,   $[A]^{\kappa}$, $[A]^{<\kappa}$ and $[A]^{\leq \kappa}$  are  the families of all subsets of $A$ of cardinality $\kappa$, less than $\kappa$ and less than or equal to $\kappa$, respectively.

An \textbf{ideal} on  a nonempty set $X$ is a family $\mathcal{I}\subset P(X)$ such that

{\leftskip 5mm (1) $\emptyset \in \mathcal{I}$,

(2) If $A \in  \mathcal{I}$  and $B \subseteq A$, then $B \in \mathcal{I}$,

(3) If $A, B \in \mathcal{I}$, then $A \cup B \in \mathcal{I}$.

}

 %Let us notice that, by this definition,  $P(X)$  is an ideal.
If    $\langle X, \tau\rangle$ is a  topological space, then an \textbf{ideal topological space} is a triple $\langle X, \tau, \mathcal{I}\rangle$ .

In an ideal topological space $\langle X, \tau, \mathcal{I}\rangle$ ,  the \textbf{local function} (see \cite{KUR}) can be defined  as follows
$$A^*_{(\tau, \mathcal{I})}=\{x \in X: A \cap U \not \in \mathcal{I} \mbox { for each }  U \in \tau(x)\}.$$
If it is clear which topology and ideal are considered, we write briefly $A^*$. It is monotone operator and $(A^*)^*\subseteq A^*$. Clearly, if $\mathcal{I}=\{\emptyset\}$, then $A^*=\Cl (A)$.

Basic properties of the local function can be found in the survey paper of Jankovi\'c and Hamlett   \cite{JH}.

%{\leftskip 5mm (1) $A \subseteq B \Rightarrow A^* \subseteq B^*$;

%(2) $A^*=\Cl(A^*)\subseteq \Cl(A)$;

%(3) $(A^*)^*\subseteq A^*$;

%(4) $(A\cup B)^*=A^* \cup B^*$

%(5) If $I \in \mathcal{I}$, then $(A\cup I)^*=A^*=(A\setminus I)^*$.

%}

Using the local function, a new topology $\tau^{*}(\mathcal{I})$ can be defined using the closure operator $\Cl^*(A)=A \cup A^*$.
Therefore,  $\tau^*(\mathcal{I})$ can be described as
$$\tau^*(\mathcal{I})=\{U \subseteq X: \Cl^*(X \setminus U)=X \setminus U\}.$$
Note that $\tau \subseteq \tau^*\subseteq P(X)$.

Several modifications of the local function were studied though the history. We will deal with the one given by  Al-Omari and   Noiri \cite{ON}. They defined the
 \textbf{local closure function} as a generalization of $\theta$-closure in ideal topological spaces.  The local closure function   in an ideal topological space $\langle X, \tau, \mathcal{I}\rangle$ is defined as
$$\Gamma_{(\tau,\mathcal{I})}(A)=\{x \in X: \Cl(U) \cap A \not \in \mathcal {I}\mbox{ for each }U \in \tau(x)\}.$$

 If the topology and the ideal are given, we write briefly $\Gamma(A)$. It is monotone operator, but there is no general relation between $A$ and $\Gamma(A)$, and it is not idempotent. Notice that if $ \mathcal {I}=\{\emptyset\}$ then,  for each set $A$, we have $\Gamma(A)=\Cl_\theta(A)$.

Some basic properties of the local closure function can be found in  \cite{ON}, and further analysis of its properties and relations with the local function in \cite{pav} and \cite{njampav}.

%{\leftskip 5mm (1) $A^*\subseteq  \Gamma(A)$;

%(2) $\Gamma(A)=\Cl(\Gamma(A))\subseteq \Cl_\theta(A)$;

%(3) $\Gamma(A \cup B)=\Gamma(A) \cup \Gamma(B)$;

%(4) $\Gamma(A \cup I)=\Gamma(A) =\Gamma(A \setminus I)$ for each $I \in \mathcal{I}$.

%\noindent Let us notice that there are no general relation  between $\Gamma(\Gamma(A))$ and $\Gamma(A)$, so  both inclusions and equality are possible.

Al-Omari and Noiri  \cite{ON}  also studied  a  variant of $\theta$-interior in ideal topological spaces. They denoted this operator  by $\psi_\Gamma(A)$ and defined it by $$\psi_\Gamma(A)=X \setminus \Gamma(X \setminus A).$$
 Using  $\psi_\Gamma(A)$  they defined a new topology  $\sigma$ using the operator $ \psi_\Gamma$:
$$A \in \sigma \Leftrightarrow A\subseteq \psi_\Gamma(A).$$
$F$ is a closed set in the topology $\sigma$ iff  $\Gamma(F) \subseteq F$.
It is important to point out that $\tau_\theta \subseteq \sigma$, and if  $ \mathcal {I}=\{\emptyset\}$, we have $\tau_\theta = \sigma$.

Since we are dealing with functions,we will always deal with two topologies. To distinguish them, sometimes we will put index of the set next to the topology, like $\tau_X$, or $\sigma_Y$. But, when it is clear about what is the carrier set of the topology we are talking about, that index will be omitted, especially when the name of the topology has to be part of the closure or interior operator.

%%%%%%%%%%%%%%%%%%%%%%%%%%%%%%%%%%%%%%%%%%%%%%%%%%%%%%%% novo

\section{Several types of continuity}

In this section we will give definitions of various types of continuity and their known relations. All are defined in classical topological spaces without ideals.

The notation is standard. If $f:X \to Y$, for $A \subseteq X$ and $B \subseteq Y$, direct image of the set $A$ is defined by $f[A]=\{f(x):x \in A\}$ and preimage of $B$ is defined  by $f^{-1}[B]=\{x \in X :f(x) \in B \}$.

The following definition belongs to the folklore of general topology.
\begin{defn}
A  function  $f:X \to Y$  is \textbf{continuous } at  the  point  $x\in  X$   if  and  only  if  for  each  neighbourhood  $V$  of  $f(x)$  there  is a  neighbourhood  $U$   of  $x$  such  that  $$f[U]\subseteq V.$$   $f:X\to Y$  is  continuous  if  and  only  if  $f$  is    continuous  at  each  point  $x  \in   X$.
\end{defn}

\begin{prop}\label{continuityeq}
\cite[Proposition 1.4.1]{eng} For  $f:\langle X, \tau_X\rangle \to \langle Y, \tau_Y\rangle$ the following conditions are equivalent

a) $f$ is continuous.

b) For each $O\in  \tau_Y$ we have $f^{-1}[O]\in \tau_X$.

c) For each $A\subseteq X$ we have $f[\Cl(A)]\subseteq \Cl(f[A])$.

d) For each $B\subseteq Y$ we have $\Cl(f^{-1}[B])\subseteq {f^{-1}[\Cl(B)]}$.

e) For each $B\subseteq Y$ we have ${f^{-1}[\Int(B)]}\subseteq \Int({f^{-1}[{B}]})$.

\end{prop}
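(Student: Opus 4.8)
The plan is to prove the standard five-way equivalence for continuity by establishing a cycle of implications a) $\Rightarrow$ b) $\Rightarrow$ c) $\Rightarrow$ d) $\Rightarrow$ e) $\Rightarrow$ a), together with the easy passages between the dual formulations.

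First I would show a) $\Rightarrow$ b). Assume $f$ is continuous and take $O \in \tau_Y$. For every $x \in f^{-1}[O]$, the set $O$ is a neighbourhood of $f(x)$, so by continuity at $x$ there is $U \in \tau_X(x)$ with $f[U] \subseteq O$, i.e. $U \subseteq f^{-1}[O]$. Hence $f^{-1}[O]$ is a neighbourhood of each of its points and is therefore open. For b) $\Rightarrow$ c), fix $A \subseteq X$; I want $f[\Cl(A)] \subseteq \Cl(f[A])$, equivalently $\Cl(A) \subseteq f^{-1}[\Cl(f[A])]$. Since $\Cl(f[A])$ is closed in $Y$, its complement is open, so by b) the set $f^{-1}[\Cl(f[A])]$ is closed in $X$; it obviously contains $A$, hence it contains $\Cl(A)$, which is what we need.

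Next, c) $\Rightarrow$ d): given $B \subseteq Y$, apply c) to $A = f^{-1}[B]$ to get $f[\Cl(f^{-1}[B])] \subseteq \Cl(f[f^{-1}[B]]) \subseteq \Cl(B)$, where the last inclusion uses $f[f^{-1}[B]] \subseteq B$ and monotonicity of closure; taking preimages yields $\Cl(f^{-1}[B]) \subseteq f^{-1}[\Cl(B)]$. For d) $\Rightarrow$ e), I pass to complements: for $B \subseteq Y$ apply d) to $Y \setminus B$ and use the identities $\Cl(X \setminus S) = X \setminus \Int(S)$, $f^{-1}[Y \setminus B] = X \setminus f^{-1}[B]$, and $f^{-1}[\Cl(Y\setminus B)] = f^{-1}[Y \setminus \Int(B)] = X \setminus f^{-1}[\Int(B)]$; rearranging gives $f^{-1}[\Int(B)] \subseteq \Int(f^{-1}[B])$.

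Finally e) $\Rightarrow$ a): let $x \in X$ and let $V$ be a neighbourhood of $f(x)$, so $f(x) \in \Int(V)$, i.e. $x \in f^{-1}[\Int(V)]$. By e), $x \in \Int(f^{-1}[V])$, so $U := \Int(f^{-1}[V])$ is an open neighbourhood of $x$ with $f[U] \subseteq f[f^{-1}[V]] \subseteq V$, establishing continuity at $x$. None of the steps is a genuine obstacle; the only points demanding care are the complementation bookkeeping in d) $\Rightarrow$ e) and remembering to use $f[f^{-1}[B]] \subseteq B$ (rather than equality) wherever it appears. Since this is a cited classical result \cite[Proposition 1.4.1]{eng}, in the paper it would suffice to state the equivalence and reference the source, but the cyclic argument above is the proof one would reconstruct.
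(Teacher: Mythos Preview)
Your cyclic argument a) $\Rightarrow$ b) $\Rightarrow$ c) $\Rightarrow$ d) $\Rightarrow$ e) $\Rightarrow$ a) is correct, and you have already anticipated the paper's treatment: the proposition is stated with the citation \cite[Proposition 1.4.1]{eng} and no proof is given in the paper itself. So there is nothing to compare beyond noting that your reconstruction is the standard one and matches what one finds in Engelking.
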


\begin{defn}[Levine, \cite{Lev}]
	A  function  $f:X \to Y$  is \textbf{weakly  continuous}  at  the  point  $x\in  X$   if  and  only  if  for  each  neighbourhood  $V$  of  $f(x)$  there  is a  neighbourhood  $U$   of  $x$  such  that  $f[U]\subseteq \Cl(V)$.  A  function  $f:X\to Y$  is  weakly  continuous  if  and  only  if  $f$  is  weakly  continuous  at  each  point  $x  \in   X$.
\end{defn}

An equivalent condition for weak continuity can be given in terms of preimage. 

\begin{thm} [Levine, \cite{Lev}]  A  function  $f:X\to  Y$  is  \textbf{weakly   continuous}   if  and   only  if $f^{-1}[V] \subseteq \Int(f^{-1}[\Cl[V]]) $  for  each  open  subset  $V$  of   $Y$.
\end{thm}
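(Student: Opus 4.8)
The plan is to prove both implications directly from the pointwise definition of weak continuity, using only the elementary facts that $\Int(S)$ is the largest open set contained in $S$ and that $f[f^{-1}[B]] \subseteq B$.

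For the forward direction, I would assume $f$ is weakly continuous and fix an open $V \subseteq Y$. Taking an arbitrary $x \in f^{-1}[V]$, the set $V$ is an open neighbourhood of $f(x)$, so weak continuity at $x$ supplies a neighbourhood $U$ of $x$ with $f[U] \subseteq \Cl(V)$; this yields $U \subseteq f^{-1}[\Cl(V)]$, and since $U$ is open and contains $x$, we get $x \in \Int(f^{-1}[\Cl(V)])$. As $x$ was arbitrary, $f^{-1}[V] \subseteq \Int(f^{-1}[\Cl(V)])$.

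For the converse, I would assume the inclusion holds for every open $V \subseteq Y$ and verify weak continuity at an arbitrary point $x \in X$. Given a neighbourhood of $f(x)$, I would first pass to an open neighbourhood $V$ of $f(x)$ contained in it (which, by monotonicity of $\Cl$, only shrinks $\Cl(V)$), so that $x \in f^{-1}[V]$. The hypothesis then gives $x \in \Int(f^{-1}[\Cl(V)])$, so $U := \Int(f^{-1}[\Cl(V)])$ is an open neighbourhood of $x$, and $f[U] \subseteq f[f^{-1}[\Cl(V)]] \subseteq \Cl(V)$, which is exactly what weak continuity at $x$ requires.

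The argument is entirely routine; the only point needing a moment's care is the reading of ``neighbourhood'' in the definition — whether it denotes an open set or merely a set with nonempty interior — but in either case the reduction to the open case is immediate because $\Cl$ is monotone, so there is no genuine obstacle here.
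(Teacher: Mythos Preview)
Your argument is correct and is exactly the standard proof of Levine's characterization. Note, however, that the paper does not supply its own proof of this theorem: it is stated as a cited result from \cite{Lev} with no accompanying proof environment, so there is nothing in the paper to compare against. Your write-up would serve perfectly well as a self-contained proof, and your closing remark about the two readings of ``neighbourhood'' is apt given that the paper works with $\tau(x)$ as the family of \emph{open} neighbourhoods.
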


%\begin{defn}\cite{SingSing}  A  function  $f:X \to     Y$  is almost  continuous  (in the sense of Singal and Singal) if  and  only  if  for  each  $x  \in  X$  and  for  each  neighbourhood  $V$  of  $f(x)$ exists neighbourhood $U$ of $x$ such that  $f[U] \subseteq \Int \Cl(V)$.
%\end{defn}

%\begin{thm}\cite{SingSing} If a function is weakly continuous, then it is almost continuous in the sense of Singal and Singal.
%\end{thm}

%\begin{defn}\cite{Husain}  A  function  $f:X \to     Y$  is almost  continuous  (in the sense of Husain) if  and  only  if  for  each  $x  \in  X$  and  for  each  neighbourhood  $V$  of  $f(x)$  $\Cl(f^{-1}[ V])$  is  a  neighbourhood  of  $x$.
%\end{defn}

%\begin{thm} \cite{Husain}  A  function  $f  : X  \to    Y$  is  almost  continuous  if and  only  $f^{-1}[V] \subseteq  \Int(Cl(f^{-1}[ V]))$  for  each  open  subset  $V$  of   $Y$.
%\end{thm}
%\begin{defn} \cite{Lev} A  function  $f:X \to Y$   is  weak*  continuous  if  and  only  if  $f^{-1}[\partial(V)]$   is  closed   in   $X$    for   each   open   subset   $V$   of   $Y$  where   $\partial(V)  =  \Cl(V)-\Int(V)$  is  the  frontier  (or  boundary)  of   $V$.
%\end{defn}
%Levine's  decomposition   theorem  \cite{Lev}  states  that  a  function  $f:X \to  Y$  is  continuous   if  and  only  if  $f   $ is  weakly  continuous   and   weak*   continuous.

\begin{defn} [Fomin, \cite{Fomin}] A  function   $f:  X\to  Y$ is \textbf{$\theta$-continuous} in $x_0 \in X$ iff for each open neighbourhood $V$ of $f(x_0)$ there exists open neighbourhood $U$ of $x_0$        such that $f[\Cl(U)] \subseteq \Cl(V)$. The same definition is given   in \cite{AnWh}, but there it is called \textbf{closure continuity}.
\end{defn}

It is important to mention that $\theta$-continuity is not the same as continuity in topologies of $\theta$-open sets. Therefore, to make a difference, the second type of continuity we will call \textbf{$\tau_\theta$-continuity}. The following result gives  a sufficient condition for preserving $\theta$-continuity when topology $\tau$ on the domain is replaced with the finer topology $\tau^*$.

\begin{thm}[Jankovi\'{c}, Hamlett, \cite{JH}]
	If $X=X^*$  then $f:\langle X, \tau\rangle \to Y$ is $\theta$-continuous iff $f:\langle X, \tau^*\rangle \to Y$ is $\theta$-continuous.
\end{thm}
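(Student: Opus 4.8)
The plan is to prove the two implications separately; the direction from the $\tau$-version to the $\tau^{*}$-version is purely formal and does not even use $X=X^{*}$, whereas the converse is where the hypothesis does the work. For the forward direction, assume $f\colon\langle X,\tau\rangle\to Y$ is $\theta$-continuous, fix $x_{0}\in X$ and an open neighbourhood $V$ of $f(x_{0})$ in $Y$, and choose $U\in\tau(x_{0})$ with $f[\Cl_{\tau}(U)]\subseteq\Cl(V)$. Since $\tau\subseteq\tau^{*}$, this same $U$ is a $\tau^{*}$-open neighbourhood of $x_{0}$, and since $\tau^{*}$ is finer than $\tau$ we have $\Cl_{\tau^{*}}(U)\subseteq\Cl_{\tau}(U)$; hence $f[\Cl_{\tau^{*}}(U)]\subseteq f[\Cl_{\tau}(U)]\subseteq\Cl(V)$, which is exactly $\theta$-continuity of $f\colon\langle X,\tau^{*}\rangle\to Y$ at $x_{0}$.

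For the converse I would first recall the standard base for the $\ast$-topology: the family $\{G\setminus I : G\in\tau,\ I\in\mathcal{I}\}$ is a base for $\tau^{*}$. Now assume $f\colon\langle X,\tau^{*}\rangle\to Y$ is $\theta$-continuous, fix $x_{0}$ and an open neighbourhood $V$ of $f(x_{0})$, and pick $W\in\tau^{*}(x_{0})$ with $f[\Cl_{\tau^{*}}(W)]\subseteq\Cl(V)$. Using the base, choose $G\in\tau$ and $I\in\mathcal{I}$ with $x_{0}\in G\setminus I\subseteq W$, and set $U:=G\in\tau(x_{0})$. The proof then reduces to the single inclusion
\[
\Cl_{\tau}(G)\subseteq\Cl_{\tau^{*}}(G\setminus I),
\]
because it yields $f[\Cl_{\tau}(U)]=f[\Cl_{\tau}(G)]\subseteq f[\Cl_{\tau^{*}}(G\setminus I)]\subseteq f[\Cl_{\tau^{*}}(W)]\subseteq\Cl(V)$, i.e.\ $\theta$-continuity of $f\colon\langle X,\tau\rangle\to Y$ at $x_{0}$.

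It remains to prove that inclusion, and this is the only non-routine step, the place where $X=X^{*}$ enters. I would use the equivalent reformulation that $\mathcal{I}$ is codense, i.e.\ $\tau\cap\mathcal{I}=\{\emptyset\}$: indeed $x\notin X^{*}$ precisely when some open neighbourhood of $x$ belongs to $\mathcal{I}$, so $X=X^{*}$ says that no nonempty open set lies in $\mathcal{I}$. Take $y\in\Cl_{\tau}(G)$ and a basic $\tau^{*}$-neighbourhood $H\setminus J$ of $y$, with $H\in\tau(y)$, $J\in\mathcal{I}$. Since $y\in\Cl_{\tau}(G)$, the set $H\cap G$ is open and nonempty, so $H\cap G\notin\mathcal{I}$; as $I\cup J\in\mathcal{I}$, heredity of $\mathcal{I}$ forces $(H\cap G)\setminus(I\cup J)=(H\setminus J)\cap(G\setminus I)\neq\emptyset$. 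Hence every $\tau^{*}$-neighbourhood of $y$ meets $G\setminus I$, so $y\in\Cl_{\tau^{*}}(G\setminus I)$, as required. The main obstacle is precisely this inclusion: in passing from a $\tau^{*}$-open witness $W$ back to a $\tau$-open witness one is forced to delete an ideal set, and without codensity of $\mathcal{I}$ (for instance if $G\in\mathcal{I}$) the $\tau$-closure of $G$ can be strictly larger than the $\tau^{*}$-closure of $G\setminus I$, so the hypothesis $X=X^{*}$ is genuinely needed; everything else is bookkeeping with the definitions of $\theta$-continuity and of $\tau^{*}$.
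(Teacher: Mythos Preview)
Your proof is correct. The paper does not give its own proof of this statement: it is simply quoted from Jankovi\'c and Hamlett \cite{JH} as background in Section~3, with no argument supplied. So there is nothing to compare against beyond noting that your argument is the standard one: the forward direction is immediate from $\tau\subseteq\tau^{*}$ and $\Cl_{\tau^{*}}\subseteq\Cl_{\tau}$, and for the converse you correctly identify the key inclusion $\Cl_{\tau}(G)\subseteq\Cl_{\tau^{*}}(G\setminus I)$ and derive it from the codensity reformulation of $X=X^{*}$ (i.e.\ $\tau\cap\mathcal I=\{\emptyset\}$) together with the basic-open description of $\tau^{*}$.
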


\begin{defn}[Long \& Herrington, \cite{LongHerr}]
A  function  $f:\langle X, \tau_X\rangle \to \langle Y, \tau_Y \rangle$  is \textbf{faintly continuous}  at  the  point  $x\in  X$   if  and  only  if  for  each  $\theta$-open neighbourhood  $V$  of  $f(x)$  there  is an  open neighbourhood  $U$   of  $x$  such  that  $$f[U]\subseteq V.$$  $f:X\to Y$  is faintly  continuous  if  and  only  if  $f$  is   faintly  continuous  at  each  point  $x  \in   X$.
\end{defn}

Directly from the definition follows that  $f:\langle X, \tau_X\rangle \to \langle Y, \tau_Y \rangle$  is  {faintly continuous} iff $f:\langle X, \tau_X\rangle \to \langle Y, (\tau_\theta)_{Y} \rangle$  is continuous.
In the same paper it is proved that continuity implies $\tau_\theta$-continuity.

\begin{thm}[Long and Herrington, \cite{LongHerr}] \label{T3} If $f:\langle X, \tau_X\rangle \to \langle Y, \tau_Y\rangle$ is continuous then $f:\langle X, (\tau_\theta)_{X}\rangle \to \langle Y, (\tau_\theta)_{Y}\rangle$ is continuous.
\end{thm}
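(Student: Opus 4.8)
The plan is to work directly with the defining property of $\theta$-open sets and to translate continuity into its closure form, Proposition~\ref{continuityeq}(c). It suffices to show that whenever $W\subseteq Y$ is $\theta$-open in $\tau_Y$, its preimage $f^{-1}[W]$ is $\theta$-open in $\tau_X$; this is exactly condition (b) of Proposition~\ref{continuityeq} applied to the map $f:\langle X,(\tau_\theta)_X\rangle\to\langle Y,(\tau_\theta)_Y\rangle$, since $(\tau_\theta)_Y$ consists precisely of the $\theta$-open subsets of $Y$.

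So I would fix a $\theta$-open set $W\subseteq Y$ and a point $x\in f^{-1}[W]$, i.e. $f(x)\in W$. By $\theta$-openness of $W$ there is $V\in\tau_Y(f(x))$ with $\Cl(V)\subseteq W$. Now I invoke continuity of $f:\langle X,\tau_X\rangle\to\langle Y,\tau_Y\rangle$ at $x$: there is $U\in\tau_X(x)$ with $f[U]\subseteq V$. The key point is that $U$ is a genuine $\tau_X$-open neighbourhood of $x$, which is exactly the kind of witness the definition of $\theta$-open in $X$ asks for, so no further work is needed to "upgrade" the neighbourhood.

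It remains to check that $\Cl(U)\subseteq f^{-1}[W]$. Here I use Proposition~\ref{continuityeq}(c): $f[\Cl(U)]\subseteq\Cl(f[U])\subseteq\Cl(V)\subseteq W$, whence $\Cl(U)\subseteq f^{-1}[W]$. Since $x\in f^{-1}[W]$ was arbitrary, $f^{-1}[W]$ satisfies $\forall x\in f^{-1}[W]\ \exists U\in\tau_X(x)\ \Cl(U)\subseteq f^{-1}[W]$, i.e. $f^{-1}[W]$ is $\theta$-open, which is what we wanted.

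There is essentially no serious obstacle: the whole argument is a one-step chase once one notices that the $\theta$-open condition is stated in terms of the \emph{original} topologies on both sides, so ordinary continuity (in the form of the inclusion $f[\Cl(A)]\subseteq\Cl(f[A])$) is all that is required. The only thing to be careful about is notational bookkeeping between the operators $\Cl_{\tau_X}$, $\Cl_{\tau_Y}$ and the $\theta$-topologies $(\tau_\theta)_X$, $(\tau_\theta)_Y$, so that it is clear the closures appearing in the computation are taken in the starting topologies and not in the $\theta$-topologies.
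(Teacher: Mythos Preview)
Your argument is correct: the one-line chain $f[\Cl(U)]\subseteq\Cl(f[U])\subseteq\Cl(V)\subseteq W$ does exactly what is needed, and the bookkeeping between the original closures and the $\theta$-topologies is handled cleanly.

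Note, however, that the paper does not give a self-contained proof of Theorem~\ref{T3}; it is simply quoted from Long and Herrington~\cite{LongHerr}. What the paper \emph{does} do later is recover Theorem~\ref{T3} as a special case of a strictly stronger statement: the Corollary following Theorem~\ref{TTc2} shows that even $\theta$-continuity of $f$ (which is weaker than continuity) suffices for $\tau_\theta$-continuity. That route is quite different from yours. It passes through the ideal-topological machinery: one first proves $f[\Gamma(A)]\subseteq\Gamma(f[A])$ for $\theta$-continuous $f$ (Theorem~\ref{Tc1}), then builds $\Cl_\sigma$ by transfinite iteration of $\Gamma$ (Theorem~\ref{Closuresigma}), and finally pushes the inclusion through all stages of the iteration (Theorem~\ref{TTc2}); setting $\mathcal{I}=\{\emptyset\}$ collapses $\sigma$ to $\tau_\theta$. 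Your direct preimage argument is far more economical for the statement as written and avoids any transfinite recursion, while the paper's detour pays off by delivering the sharper hypothesis ($\theta$-continuity instead of continuity) and the generalization to arbitrary ideals.
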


The following result is obvious, but it is given since it will represent one arrow at the diagram which will be given at  the end of the section.

\begin{thm}[Long and Herrington, \cite{LongHerr}] If $f:\langle X, (\tau_\theta)_{X}\rangle \to \langle Y, (\tau_\theta)_{Y}\rangle$ is continuous then   $f:\langle X, \tau_X\rangle \to \langle Y, (\tau_\theta)_{Y}\rangle$ is continuous, i.e. $f:\langle X, \tau_X\rangle \to \langle Y, \tau_Y \rangle$ is  faintly continuous.
\end{thm}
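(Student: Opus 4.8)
The plan is to reduce everything to the single structural fact, recorded in Section~2, that $\tau_\theta\subseteq\tau$ on any space, together with the preimage characterization of continuity from Proposition~\ref{continuityeq}.

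First I would start from the hypothesis and a test set: let $O$ be an arbitrary $\theta$-open subset of $Y$, i.e. $O\in(\tau_\theta)_Y$. Since $f\colon\langle X,(\tau_\theta)_X\rangle\to\langle Y,(\tau_\theta)_Y\rangle$ is continuous, Proposition~\ref{continuityeq}(b) gives $f^{-1}[O]\in(\tau_\theta)_X$. Now I invoke the inclusion $(\tau_\theta)_X\subseteq\tau_X$ to conclude $f^{-1}[O]\in\tau_X$. Thus the preimage under $f$ of every member of $(\tau_\theta)_Y$ lies in $\tau_X$, which is precisely condition~(b) of Proposition~\ref{continuityeq} applied to the map $f\colon\langle X,\tau_X\rangle\to\langle Y,(\tau_\theta)_Y\rangle$; hence that map is continuous.

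Finally, for the ``i.e.'' clause I would simply quote the equivalence recorded immediately before Theorem~\ref{T3}, namely that $f\colon\langle X,\tau_X\rangle\to\langle Y,\tau_Y\rangle$ is faintly continuous if and only if $f\colon\langle X,\tau_X\rangle\to\langle Y,(\tau_\theta)_Y\rangle$ is continuous; combining this with the previous paragraph finishes the proof. There is no genuine obstacle here. The only point requiring care is to use the inclusion in the correct direction --- every $\theta$-open set is open, not conversely --- so that openness in $(\tau_\theta)_X$ upgrades to openness in $\tau_X$ and not the other way around.
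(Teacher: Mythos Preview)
Your argument is correct and is exactly the ``obvious'' reasoning the paper has in mind: the paper does not actually write out a proof of this theorem, remarking just before it that ``the following result is obvious,'' and your use of $(\tau_\theta)_X\subseteq\tau_X$ together with the preimage characterization of continuity is precisely that obvious argument.
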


\begin{thm}[Long and Herrington, \cite{LongHerr}] If $f:\langle X, \tau_X\rangle \to \langle Y, \tau_Y\rangle$ is a weakly continuous function then
 $f:\langle X, \tau_X\rangle \to \langle Y, \tau_Y \rangle$ is faintly continuous.
\end{thm}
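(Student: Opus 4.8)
The plan is to verify faint continuity pointwise, directly from the definitions, using weak continuity together with the defining property of $\theta$-open sets. Fix $x \in X$ and let $W$ be an arbitrary $\theta$-open neighbourhood of $f(x)$ in $Y$; we must produce an open neighbourhood $U$ of $x$ with $f[U] \subseteq W$.

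First I would unpack what it means for $W$ to be $\theta$-open: since $f(x) \in W$, there exists $V \in \tau_Y(f(x))$ with $\Cl(V) \subseteq W$. This $V$ is then an \emph{ordinary} open neighbourhood of $f(x)$, which is exactly the kind of set to which weak continuity applies. So the second step is to invoke weak continuity of $f$ at $x$ with respect to $V$: this yields an open neighbourhood $U$ of $x$ such that $f[U] \subseteq \Cl(V)$.

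Combining the two inclusions gives $f[U] \subseteq \Cl(V) \subseteq W$, which is precisely the condition required for faint continuity at $x$. Since $x \in X$ was arbitrary, $f$ is faintly continuous, equivalently (by the remark preceding Theorem~\ref{T3}) $f:\langle X, \tau_X\rangle \to \langle Y, (\tau_\theta)_{Y}\rangle$ is continuous.

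There is essentially no obstacle here: the only point requiring care is the bookkeeping of which topology each neighbourhood lives in — the $\theta$-open set $W$ is a neighbourhood in $(\tau_\theta)_Y$, the intermediate set $V$ and its closure are taken in $\tau_Y$, and the preimage neighbourhood $U$ is in $\tau_X$ — but once the definition of $\theta$-openness is applied to pass from $W$ down to an honest open set $V$, weak continuity closes the argument immediately.
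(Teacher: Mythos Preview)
Your argument is correct and is essentially the standard direct proof: pass from the $\theta$-open $W$ to an honest open $V$ with $\Cl(V)\subseteq W$, then apply weak continuity to $V$. Nothing is missing.

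The paper, however, does not argue this way. In Section~3 the result is merely cited from \cite{LongHerr}; the paper's own derivation appears in Section~5 as a corollary of Theorem~\ref{Tw2}. There the route is: first establish (Theorem~\ref{Tw1}) that weak continuity together with the ideal condition $f^{-1}[\mathcal{I}_Y]\subseteq\mathcal{I}_X$ forces $f[A^*]\subseteq\Gamma(f[A])$ for all $A\subseteq X$; then deduce (Theorem~\ref{Tw2}) that $f:\langle X,\tau^*_X\rangle\to\langle Y,\sigma_Y\rangle$ is continuous; finally specialize to $\mathcal{I}_X=\mathcal{I}_Y=\{\emptyset\}$, whence $\tau^*=\tau$ and $\sigma=\tau_\theta$, recovering faint continuity. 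So your proof is direct and elementary, needing only the two definitions, whereas the paper's proof is deliberately indirect: it recovers the classical statement as the trivial-ideal case of a genuinely more general preservation theorem in ideal topological spaces. Your approach is shorter and self-contained; the paper's approach is the point of the paper, since it exhibits the Long--Herrington result as a shadow of the $\tau^*\to\sigma$ continuity phenomenon.
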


Trivially, $\theta$-continuous function is weakly continuous. So, so far, the following diagram illustrates current known relations between various types of continuity. It is also known that opposite implications do not hold in general.

\begin{center}
\begin{tikzpicture}[commutative diagrams/every diagram]
\node(P0)at (90:1.2cm){Continuity};
\node(P1)at (90+72+20:2cm){$\theta$-continuity} ;
\node(P2)at (90+2*72:2cm){\makebox[5ex][r]{weak continuity}};
\node(P3)at (90+3*72:2cm){\makebox[5ex][l]{faint continuity}};
\node(P4)at (90+4*72-20:2cm){$\tau_\theta$-continuity};
\path[commutative diagrams/.cd,every arrow,every label]
(P0)edgenode[swap] { } (P1)
(P1)edgenode[swap] { } (P2)
(P2)edgenode{} (P3)
(P4)edgenode{} (P3)
(P1)edgenode{} (P3)
(P0)edgenode{} (P3)
(P0)edgenode{} (P2)
(P0)edgenode{} (P4);
\end{tikzpicture}
\end{center}

\section{$\theta$-continuity and local closure function}

\begin{thm} \label{Tc1} Let $\langle X, \tau_X, \mathcal{I}_X\rangle$ and $\langle Y, \tau_Y, \mathcal{I}_Y\rangle$ be ideal topological spaces. If $f:\langle X, \tau_X\rangle \to \langle Y, \tau_Y\rangle$ is a $\theta$-continuous function and for all $I\in \mathcal{I}_Y$ we have $f^{-1}[I]\in \mathcal{I}_X$, then there hold the following equivalent conditions:

a) $\forall A \subseteq X~f[\Gamma(A)]\subseteq \Gamma(f[A]);$

b) $\forall B \subseteq Y~\Gamma(f^{-1}[B])\subseteq f^{-1}[\Gamma(B)].$

\end{thm}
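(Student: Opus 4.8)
First I would show that the two conditions (a) and (b) are equivalent, independently of the hypotheses, just from the definitions of $\Gamma$ and the set-theoretic behaviour of preimages. Assuming (a), take $B \subseteq Y$ and put $A = f^{-1}[B]$; then $f[A] \subseteq B$, so by monotonicity of $\Gamma$ and (a) we get $f[\Gamma(f^{-1}[B])] \subseteq \Gamma(f[A]) \subseteq \Gamma(B)$, and taking preimages gives $\Gamma(f^{-1}[B]) \subseteq f^{-1}[f[\Gamma(f^{-1}[B])]] \subseteq f^{-1}[\Gamma(B)]$. Conversely, assuming (b), take $A \subseteq X$ and put $B = f[A]$; then $A \subseteq f^{-1}[f[A]] = f^{-1}[B]$, so $\Gamma(A) \subseteq \Gamma(f^{-1}[B]) \subseteq f^{-1}[\Gamma(B)] = f^{-1}[\Gamma(f[A])]$, and applying $f$ yields (a). This is the routine half and I would dispatch it quickly.

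The substantive part is to prove (a) (or equivalently (b)) from the hypotheses. I would prove (b) directly. Fix $B \subseteq Y$ and let $x \in \Gamma(f^{-1}[B])$; the goal is $f(x) \in \Gamma(B)$, i.e. for every $V \in \tau_Y(f(x))$ we must show $\Cl(V) \cap B \notin \mathcal{I}_Y$. So fix such a $V$. By $\theta$-continuity of $f$ at $x$ there is $U \in \tau_X(x)$ with $f[\Cl(U)] \subseteq \Cl(V)$. Since $x \in \Gamma(f^{-1}[B])$ we have $\Cl(U) \cap f^{-1}[B] \notin \mathcal{I}_X$. The plan is to push this through $f$: from $f[\Cl(U)] \subseteq \Cl(V)$ one checks $\Cl(U) \cap f^{-1}[B] \subseteq f^{-1}[\Cl(V) \cap B]$, hence $f^{-1}[\Cl(V) \cap B] \notin \mathcal{I}_X$ by the ideal's downward closure (property (2)). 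Now I would argue by contraposition using the hypothesis on ideals: if $\Cl(V) \cap B \in \mathcal{I}_Y$, then $f^{-1}[\Cl(V) \cap B] \in \mathcal{I}_X$, contradicting the previous line. Therefore $\Cl(V) \cap B \notin \mathcal{I}_Y$, and since $V$ was arbitrary, $f(x) \in \Gamma(B)$.

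The only place that needs care is the containment $\Cl(U) \cap f^{-1}[B] \subseteq f^{-1}[\Cl(V) \cap B]$: if $z \in \Cl(U) \cap f^{-1}[B]$ then $f(z) \in f[\Cl(U)] \subseteq \Cl(V)$ and $f(z) \in B$, so $f(z) \in \Cl(V) \cap B$, i.e. $z \in f^{-1}[\Cl(V) \cap B]$ — straightforward. So there is no real obstacle here; the main thing is to organize the argument as a contrapositive so that the hypothesis $f^{-1}[I] \in \mathcal{I}_X$ for $I \in \mathcal{I}_Y$ is applied in the right direction. The case $\mathcal{I}_Y = \{\emptyset\}$ (where $\Gamma = \Cl_\theta$) is subsumed, and one sees the statement recovers a $\theta$-closure version of Proposition \ref{continuityeq}(c)--(d) in that instance, which is a useful sanity check. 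I would close by noting explicitly that combining this with the equivalence established in the first paragraph proves both (a) and (b).
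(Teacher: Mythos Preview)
Your proof is correct and follows essentially the same approach as the paper: both use $\theta$-continuity to produce a neighbourhood whose closure maps into the closure of a given neighbourhood, then apply the ideal-pullback hypothesis in contrapositive form together with a routine set-theoretic containment. The only cosmetic differences are that the paper proves (a) first (by contradiction) and establishes the equivalence (a)$\Leftrightarrow$(b) afterwards, whereas you reverse that order and prove (b) directly; the underlying argument is the same.
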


\begin{proof}
Let us prove that a) holds. Suppose that there exists $A \subseteq X$  such that there exists $y \in f[\Gamma(A)]\setminus \Gamma(f[A])$. So, there exists $x\in \Gamma(A)$ such that $f(x)=y$ and
\begin{equation}\label{eqthc2}
\forall U \in \tau_X(x)~ \Cl(U) \cap A \not \in \mathcal{I}_X.
 \end{equation}
 Since $y \not \in \Gamma(f[A])$, there exists $W \in \tau_Y(y)$ such that $\Cl(W) \cap f[A] \in \mathcal{I}_Y$. By $\theta$-continuity, there exists $V\in \tau_X(x)$ such that $f[\Cl(V)]\subseteq \Cl[W]$. So $f[\Cl(V)] \cap f[A] \in \mathcal{I}_Y$, implying $f^{-1}[f[\Cl(V)] \cap f[A]] \in \mathcal{I}_X$, and since we have
 $$\Cl(V) \cap A \subseteq f^{-1}[f[\Cl(V)]] \cap f^{-1}[f[A]] \subseteq f^{-1}[f[\Cl(V)] \cap f[A]], $$
we conclude $\Cl(V) \cap A \in \mathcal{I}_X$, which contradicts \eqref{eqthc2}. This proves  a).

 Let us show that b) is equivalent to a). Suppose a) holds and let $B \subseteq Y$.  Then $f[\Gamma(f^{-1}[B])]\subseteq \Gamma(f[f^{-1}[B]])\subseteq \Gamma(B)$. Now  we have $\Gamma(f^{-1}[B]) \subseteq f^{-1}[f[\Gamma(f^{-1}[B])]] \subseteq f^{-1}[\Gamma(B)]$.

 Now suppose b) holds. Then $f^{-1}[\Gamma(f[A])]\supseteq \Gamma(f^{-1}[f[A]]) \supseteq \Gamma(A)$. By taking the image by $f$ of both sets we obtain $\Gamma(f[A]) \supseteq f[f^{-1}[\Gamma(f[A])]]\supseteq f[\Gamma(A)]$.
\end{proof}

In the following theorem we will show how closure in $\sigma$ topology can be obtained  by transfinite recursion.

\begin{thm}\label{Closuresigma}
Let $\CL^0(A)=A$, $\CL^{\alpha+1}(A)=\CL^{\alpha}(A) \cup \Gamma(\CL^{\alpha}(A))$, and $\CL^{\gamma}(A)=\bigcup_{\alpha< \gamma}\CL^\alpha(A)$, for any $A\subset X$, any ordinal $\alpha$ and limit ordinal $\gamma$.  Then

a) For each $\alpha<\beta$, $\CL^\alpha(A) \subseteq \CL^\beta(A)$.

b) For each $\alpha \in ON$, $\CL^\alpha(A) \subseteq \Cl_\sigma(A)$.

c) If there exists $\alpha_0\in ON$ such that $\CL^{\alpha_0}(A)=\CL^{\alpha_0+1}(A)$, then $\CL^{\alpha_0}(A)=\CL^{\alpha}(A)$ for each $\alpha \geq \alpha_0$.

d) There exists $\alpha_0 \in ON$ such that $\CL^{\alpha_0}(A)=\CL^{\alpha}(A)$ for each $\alpha \geq \alpha_0$.

e) For such $\alpha_0$ (and all ordinals larger than it) $\CL^{\alpha_0}(A)=\Cl_\sigma(A)$.
\end{thm}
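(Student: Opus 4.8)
The plan is to prove a)--e) by a short sequence of transfinite inductions, with the only non-routine ingredient being a cardinality argument for d).

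First I would fix $A\subseteq X$ and prove a) by transfinite induction on $\beta$, establishing the statement ``$\CL^\alpha(A)\subseteq\CL^\beta(A)$ for every $\alpha<\beta$''. At a successor $\beta=\beta'+1$ one has $\CL^{\beta'+1}(A)=\CL^{\beta'}(A)\cup\Gamma(\CL^{\beta'}(A))\supseteq\CL^{\beta'}(A)$, and the induction hypothesis handles $\alpha<\beta'$; at a limit $\beta$ the inclusion is immediate from $\CL^\beta(A)=\bigcup_{\xi<\beta}\CL^\xi(A)$. In particular the $ON$-indexed family $\bigl(\CL^\alpha(A)\bigr)_{\alpha}$ is non-decreasing, a fact used throughout. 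For b), recall from the definition of $\sigma$ that a set $F$ is $\sigma$-closed precisely when $\Gamma(F)\subseteq F$, so $\Cl_\sigma(A)$ contains $A$ and satisfies $\Gamma(\Cl_\sigma(A))\subseteq\Cl_\sigma(A)$. I would then induct on $\alpha$: the base case is $\CL^0(A)=A\subseteq\Cl_\sigma(A)$; at a successor, monotonicity of $\Gamma$ gives $\Gamma(\CL^\alpha(A))\subseteq\Gamma(\Cl_\sigma(A))\subseteq\Cl_\sigma(A)$, hence $\CL^{\alpha+1}(A)\subseteq\Cl_\sigma(A)$; at a limit the union of subsets of $\Cl_\sigma(A)$ is again a subset of $\Cl_\sigma(A)$.

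Part c) is another induction on $\alpha\ge\alpha_0$, assuming $\CL^{\alpha_0}(A)=\CL^{\alpha_0+1}(A)$. The base case $\alpha=\alpha_0$ is trivial; at a successor $\alpha+1$ with $\alpha\ge\alpha_0$ the induction hypothesis yields $\CL^{\alpha+1}(A)=\CL^\alpha(A)\cup\Gamma(\CL^\alpha(A))=\CL^{\alpha_0}(A)\cup\Gamma(\CL^{\alpha_0}(A))=\CL^{\alpha_0+1}(A)=\CL^{\alpha_0}(A)$; and at a limit $\gamma>\alpha_0$ one splits $\bigcup_{\alpha<\gamma}\CL^\alpha(A)$ into the ordinals $\alpha<\alpha_0$, for which $\CL^\alpha(A)\subseteq\CL^{\alpha_0}(A)$ by a), and the ordinals $\alpha_0\le\alpha<\gamma$, for which $\CL^\alpha(A)=\CL^{\alpha_0}(A)$ by the induction hypothesis, so the union equals $\CL^{\alpha_0}(A)$.

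The crux is d). Aiming at a contradiction, suppose $\CL^\alpha(A)\ne\CL^{\alpha+1}(A)$ for every ordinal $\alpha$; then by a) in fact $\CL^\alpha(A)\subsetneq\CL^{\alpha+1}(A)$ for all $\alpha$, and, again using a), the assignment $\alpha\mapsto\CL^\alpha(A)$ is strictly $\subseteq$-increasing, hence injective as a function into $P(X)$. Restricting it to the ordinals below $|P(X)|^+$ would produce an injection of $|P(X)|^+$ into $P(X)$, which is impossible; equivalently, a strictly increasing $ON$-indexed chain in the set $P(X)$ cannot exist. Hence there is an ordinal $\alpha_0$ with $\CL^{\alpha_0}(A)=\CL^{\alpha_0+1}(A)$, and c) shows this $\alpha_0$ has the required stabilization property. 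Finally, for e), put $S:=\CL^{\alpha_0}(A)$. By a) we have $A=\CL^0(A)\subseteq S$, and $\Gamma(S)\subseteq S\cup\Gamma(S)=\CL^{\alpha_0+1}(A)=S$, so $S$ is a $\sigma$-closed set containing $A$; therefore $\Cl_\sigma(A)\subseteq S$, while b) gives the reverse inclusion $S=\CL^{\alpha_0}(A)\subseteq\Cl_\sigma(A)$, so $\CL^{\alpha_0}(A)=\Cl_\sigma(A)$. I expect the only delicate points to be the bookkeeping at limit stages in a) and c) and making the ``no strictly increasing $ON$-chain in a set'' step of d) precise; the rest is routine.
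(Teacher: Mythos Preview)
Your proposal is correct and follows essentially the same route as the paper: the same transfinite inductions for a)--c), the same cardinality obstruction in $P(X)$ for d), and the same ``$\CL^{\alpha_0}(A)$ is $\sigma$-closed and contains $A$'' argument for e). If anything, your handling of the limit case in c) and the injection argument in d) is more carefully spelled out than the paper's, but there is no substantive difference in strategy.
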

\begin{proof}
a) Obviously, $\CL^\alpha(A) \subseteq \CL^{\alpha}(A) \cup \Gamma(\CL^{\alpha}(A)) = \CL^{\alpha+1}(A)$ and $\CL^\gamma(A) \supseteq \CL^\alpha(A)$ for limit ordinal $\gamma$ and each $\alpha < \gamma$. So, $\langle \CL^\alpha(A): \alpha \in ON\rangle$ is nondecreasing sequence indexed by the class of all ordinals.

b) Obviously $\Gamma(\Cl_\sigma(A)) \subseteq \Cl_\sigma(A)$ and $A=\CL^0(A) \subseteq \Cl_\sigma(A)$. Applying $\Gamma$ on the  last inclusion we get $\Gamma(\CL^0(A)) \subseteq \Gamma(\Cl_\sigma(A)) \subseteq \Cl_\sigma(A)$. So, $\CL^1(A)\subseteq \Cl_\sigma(A)$. Suppose that for each $\alpha < \beta  $  holds $\CL^\alpha(A) \subseteq \Cl_\sigma(A)$. Let us prove it for $\beta$. If $\beta$ is a limit ordinal, then it holds directly from the property of union, and if $\beta=\delta+1$ for some $\delta\in ON$, then the proof is similar to the case of $\CL^1(A)$.

c) Suppose that, for each $\alpha \in [\alpha_0, \beta)$ we have $\CL^{\alpha_0}(A)=\CL^{\alpha}(A)$, where $\beta > \alpha_0$. Let us prove that it holds for $\beta$.

If $\beta =\delta+1$, then $\CL^{\alpha_0}(A)=\CL^{\delta}(A)$, so $\Gamma(\CL^{\alpha_0}(A))=\Gamma(\CL^{\delta}(A))$, implying $\CL^{\alpha_0+1}(A)=\CL^{\delta+1}(A)$, so $\CL^{\alpha_0}(A)=\CL^{\beta}(A)$.

If $\beta$ is a limit ordinal, then, for each $\alpha\in [\alpha_0, \beta)$ we have $\CL^{\alpha_0}(A)=\CL^{\alpha}(A)$, and, due to the increasing property,
$\CL^{\beta}(A)=\bigcup^{\alpha< \beta}\CL^\alpha(A)=\bigcup \CL^{\alpha_0}(A)=\CL^{\alpha_0}(A)$.

d) Since  $\langle \CL^\alpha(A) : \alpha \in ON\rangle $ is a nondecreasing sequence, it can not strictly increase forever, since there are no more  than $|P(X)|$ different sets. So, there exists $\alpha_0$ such that $\CL^{\alpha_0}(A)=\CL^{\alpha_0+1}(A)$, and d) follows from c).

e) Obviously $A \subseteq \CL^{\alpha_0}(A)\subseteq\Cl_\sigma(A)$. If we prove that $\CL^{\alpha_0}(A)$ is a closed set in topology $\sigma$, the proof is over. Since $\CL^{\alpha_0}(A)= \CL^{\alpha_0+1}(A)=\CL^{\alpha_0}(A) \cup \Gamma(\CL^{\alpha_0}(A))$, we have $\Gamma(\CL^{\alpha_0}(A)) \subseteq \CL^{\alpha_0}(A)$, witnessing that $\Gamma(\CL^{\alpha_0}(A))$ is closed.
\end{proof}

\begin{thm} \label{TTc2} Let $\langle X, \tau_X, \mathcal{I}_X\rangle$ and $\langle Y, \tau_Y, \mathcal{I}_Y\rangle$ be ideal topological spaces. If $f:\langle X, \tau_X\rangle \to \langle Y, \tau_Y\rangle$ is a $\theta$-continuous function and for all $I\in \mathcal{I}_Y$ we have $f^{-1}[I]\in \mathcal{I}_X$, then there hold:

a) $\forall A \subseteq X~f[\CL^{\alpha}(A)] \subseteq \CL^\alpha(f[A])$, for each  ordinal $\alpha$.

b) $\forall A \subseteq X~f[\Cl_{\sigma}(A)] \subseteq \Cl_\sigma(f[A])$;

c) $f:\langle X, \sigma_X\rangle \to \langle Y, \sigma_Y\rangle$ is a continuous function.

\end{thm}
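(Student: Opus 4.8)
The three parts should follow in sequence, with (a) doing the real work and (b), (c) being short consequences. For part (a), the plan is to prove $f[\CL^{\alpha}(A)] \subseteq \CL^\alpha(f[A])$ by transfinite induction on $\alpha$, using Theorem~\ref{Tc1} (whose hypotheses—$\theta$-continuity and $f^{-1}[\mathcal I_Y]\subseteq\mathcal I_X$—are exactly the hypotheses here) as the engine at successor steps. The base case $\alpha=0$ is the trivial identity $f[A]=f[A]$. At a successor step, assume $f[\CL^{\alpha}(A)]\subseteq\CL^{\alpha}(f[A])$; then
\begin{align*}
f[\CL^{\alpha+1}(A)] &= f[\CL^{\alpha}(A)\cup\Gamma(\CL^{\alpha}(A))]\\
&= f[\CL^{\alpha}(A)]\cup f[\Gamma(\CL^{\alpha}(A))]\\
&\subseteq \CL^{\alpha}(f[A])\cup\Gamma(f[\CL^{\alpha}(A)])\\
&\subseteq \CL^{\alpha}(f[A])\cup\Gamma(\CL^{\alpha}(f[A])) = \CL^{\alpha+1}(f[A]),
\end{align*}
where the first inclusion uses the induction hypothesis on the first summand and Theorem~\ref{Tc1}a) on the second, and the second inclusion uses monotonicity of $\Gamma$ together with the induction hypothesis. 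At a limit ordinal $\gamma$, images commute with unions, so $f[\CL^{\gamma}(A)]=f\bigl[\bigcup_{\alpha<\gamma}\CL^{\alpha}(A)\bigr]=\bigcup_{\alpha<\gamma}f[\CL^{\alpha}(A)]\subseteq\bigcup_{\alpha<\gamma}\CL^{\alpha}(f[A])=\CL^{\gamma}(f[A])$, completing the induction.

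**From (a) to (b).** By Theorem~\ref{Closuresigma}d), pick $\alpha_0\in ON$ with $\CL^{\alpha_0}(A)=\Cl_\sigma(A)$; by the same theorem there is $\beta_0$ with $\CL^{\beta_0}(f[A])=\Cl_\sigma(f[A])$. Let $\alpha=\max\{\alpha_0,\beta_0\}$; then by Theorem~\ref{Closuresigma}c) and e) both stabilized values are still $\Cl_\sigma(A)$ and $\Cl_\sigma(f[A])$ respectively. Applying part (a) at this common $\alpha$ gives $f[\Cl_\sigma(A)]=f[\CL^{\alpha}(A)]\subseteq\CL^{\alpha}(f[A])=\Cl_\sigma(f[A])$, which is (b). (Alternatively one can avoid the stabilization ordinals entirely: $f[\Cl_\sigma(A)]$—if one first shows $\Cl_\sigma(A)=\bigcup_{\alpha}\CL^{\alpha}(A)$ over all ordinals—is a union to which (a) applies termwise; but invoking Theorem~\ref{Closuresigma}e) is cleaner.)

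**From (b) to (c).** Condition (b) says $f[\Cl_\sigma(A)]\subseteq\Cl_\sigma(f[A])$ for every $A\subseteq X$, which is precisely the closure-preservation characterization of continuity—Proposition~\ref{continuityeq}c)—applied with the topologies $\sigma_X$ on $X$ and $\sigma_Y$ on $Y$ and their closure operators $\Cl_{\sigma_X}$, $\Cl_{\sigma_Y}$. Hence $f:\langle X,\sigma_X\rangle\to\langle Y,\sigma_Y\rangle$ is continuous.

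**Main obstacle.** None of the steps is genuinely hard once Theorems~\ref{Tc1} and~\ref{Closuresigma} are in hand; the only point demanding care is the successor step of the induction in (a), specifically making sure the two inclusions are applied to the right summands—the induction hypothesis handles $\CL^{\alpha}$, while Theorem~\ref{Tc1}a) plus monotonicity of $\Gamma$ handle the $\Gamma(\CL^{\alpha})$ term—and then noticing that monotonicity lets us bound $\Gamma(f[\CL^{\alpha}(A)])$ by $\Gamma(\CL^{\alpha}(f[A]))$. A secondary bookkeeping point is the passage (b)$\Rightarrow$ choosing a single ordinal that simultaneously stabilizes both $\CL^{\bullet}(A)$ and $\CL^{\bullet}(f[A])$; this is immediate from Theorem~\ref{Closuresigma}c,d but worth stating explicitly so the reader sees why one value of $\alpha$ serves both sides.
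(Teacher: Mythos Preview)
Your proof is correct and follows essentially the same route as the paper: transfinite induction on $\alpha$ for (a) using Theorem~\ref{Tc1}a) and monotonicity of $\Gamma$ at successor steps and commutation of images with unions at limits, then the stabilization ordinals from Theorem~\ref{Closuresigma} for (b), and finally the closure characterization of continuity (Proposition~\ref{continuityeq}c) for (c). If anything, your write-up is slightly more explicit than the paper's, which simply states ``c) is equivalent to b)'' without citing Proposition~\ref{continuityeq}.
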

\begin{proof}
a) By definition of $\CL^0$, it holds for $\alpha=0$. Suppose it holds for every $\beta<\alpha$. Let us prove that it holds for $\alpha$. Ih $\alpha$ is a consequtive ordinal, then $\alpha=\delta+1$. So, using Theorem \ref{Tcl},we have \begin{eqnarray*}
f[\CL^{\alpha}(A)]=f[\CL^{\delta+1}(A)]&=&f[\CL^{\delta}(A) \cup \Gamma(\CL^{\delta}(A))]\\&=&f[\CL^{\delta}(A)] \cup f[\Gamma(\CL^{\delta}(A))]\\ &\subseteq& \CL^\delta(f[A]) \cup  \Gamma(f[\CL^{\delta}(A)])\\ &\subseteq& \CL^\delta(f[A]) \cup  \Gamma(\CL^{\delta}(f[A]))\\ &=&\CL^{\delta+1}(f[A])
\\ &=&\CL^{\alpha}(f[A]).
\end{eqnarray*}

If $\alpha$ is a limit ordinal, then
\begin{eqnarray*}
f[\CL^{\alpha}(A)]&=&f[\bigcup_{\gamma <\alpha}\CL^{\gamma}(A)]=\bigcup_{\gamma <\alpha}f[\CL^{\gamma}(A)]
\\&\subseteq&\bigcup_{\gamma <\alpha} \CL^{\gamma}f[A]= \CL^{\alpha}f[A].
\end{eqnarray*}
b) Since, by Theorem \ref{Closuresigma} e), there exists an  ordinal $\alpha_0$ such that  $\CL^{\alpha_0}(A)=\Cl_\sigma(A)$ and ordinal $\alpha_1$ such that $\CL^{\alpha_1}(f[A])=\Cl_\sigma(f[A])$, so, for $\beta =\max \{ \alpha_0,\alpha_1\}$ holds
$$f[\Cl_{\sigma}(A)]=f[\CL^{\beta}(A)] \subseteq \CL^\beta(f[A])= \Cl_\sigma(f[A])$$.

c) is equivalent to  b).
\end{proof}

If we in the previous theorem take $\mathcal{I}=\{\emptyset\}$, we obtain a relation between $\theta$-continuous functions and $\tau_\theta$-continuity.

\begin{cor} If $f:\langle X, \tau_X\rangle \to \langle Y, \tau_Y\rangle$ is a $\theta$-continuous function then
$f:\langle X, (\tau_\theta)_{X}\rangle \to \langle Y, (\tau_\theta)_{Y}\rangle$ is continuous.
\end{cor}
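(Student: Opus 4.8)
The plan is to read this off directly from Theorem \ref{TTc2} by specializing the ideals. Take $\mathcal{I}_X = \mathcal{I}_Y = \{\emptyset\}$. Then the hypothesis ``for all $I \in \mathcal{I}_Y$ we have $f^{-1}[I] \in \mathcal{I}_X$'' reduces to the single instance $I = \emptyset$, and since $f^{-1}[\emptyset] = \emptyset \in \{\emptyset\} = \mathcal{I}_X$, this condition holds automatically for \emph{every} function $f$. Thus, whenever $f$ is $\theta$-continuous, all the conclusions of Theorem \ref{TTc2} apply with these trivial ideals.

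Next I would invoke the identification of the derived topology in the trivial-ideal case, recorded in Section 2: when $\mathcal{I} = \{\emptyset\}$ one has $\Gamma(A) = \Cl_\theta(A)$ for every $A$, and consequently the topology $\sigma$ obtained from $\psi_\Gamma$ coincides with $\tau_\theta$. Applying this on both sides, $\sigma_X = (\tau_\theta)_X$ and $\sigma_Y = (\tau_\theta)_Y$.

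Finally, combining these two observations: Theorem \ref{TTc2} c) gives that $f : \langle X, \sigma_X\rangle \to \langle Y, \sigma_Y\rangle$ is continuous, and substituting $\sigma_X = (\tau_\theta)_X$, $\sigma_Y = (\tau_\theta)_Y$ yields exactly that $f : \langle X, (\tau_\theta)_X\rangle \to \langle Y, (\tau_\theta)_Y\rangle$ is continuous, as claimed.

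There is no real obstacle here; the corollary is a pure specialization. The only things worth stating explicitly are (i) the trivial verification that $\{\emptyset\}$ satisfies the preimage hypothesis, and (ii) the already-noted fact that $\sigma = \tau_\theta$ when $\mathcal{I} = \{\emptyset\}$, so that the statement about $\sigma$-continuity in Theorem \ref{TTc2} translates into a statement about $\tau_\theta$-continuity. If one wished to avoid citing Theorem \ref{TTc2} wholesale, one could alternatively note that for the trivial ideal the whole transfinite construction $\CL^\alpha$ of Theorem \ref{Closuresigma} computes $\Cl_{\tau_\theta}$, and then the chain of inclusions in the proof of Theorem \ref{TTc2} a)--b) becomes the inclusion $f[\Cl_{\tau_\theta}(A)] \subseteq \Cl_{\tau_\theta}(f[A])$, which by Proposition \ref{continuityeq} c) is equivalent to continuity of $f$ between the $\theta$-topologies; but the one-line specialization is cleaner.
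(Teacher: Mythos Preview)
Your proposal is correct and matches the paper's own argument exactly: the paper states just before the corollary that taking $\mathcal{I}=\{\emptyset\}$ in Theorem \ref{TTc2} yields the result, and you have simply spelled out the two trivial verifications (that $f^{-1}[\emptyset]=\emptyset$ and that $\sigma=\tau_\theta$ for the trivial ideal) needed to make that specialization go through.
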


This is an improvement of the result obtained by Long and Herrington  \cite[Th.\ 8]{LongHerr}, stated in Theorem \ref{T3},  which says that continuity implies $\tau_\theta$-continuity.

It is well known that the opposite of the previous corollary does not have to be true.
\begin{ex}\label{ExABC} [$\tau_\theta$-continuity does not imply $\theta$-continuity]
\cite[Ex. 2]{LongHerr} Let $X=\{0,1\}$ with topology $\tau_X=\{\emptyset, \{1\}, \{0,1\}\}$ and let $Y=\{a,b,c\}$ with topology $\tau_Y=\{\emptyset, \{a\}, \{b\}, \{a,b\}, \{a,b,c\}\}$ and $f:X \to Y$ is defined by $f(0)=a$ and $f(1)=b$. Let $x_0=0$. Then $V=\{a\}$ be a neighbourhood of $f(x_0)=a$, and $\Cl(V)=\{a,c\}$. On the other hand, the only neighbourhood of the point $0\in X$ is $U=\{0,1\}$, which is, at the same time, its closure. But $f[\Cl(U)]=f[\{0,1\}]=\{a,b\} \not \subseteq \{a,c\}$, so, $f$ is not $\theta$-continuous. But, the only nonempty $\theta$-open set in $Y$ is $Y$, and its preimage is $X$, which is also $\theta$-open, implying that $f:\langle X, (\tau_\theta)_{X}\rangle \to \langle Y, (\tau_\theta)_{Y}\rangle$ is continuous.
\end{ex}

\section{Weakly continuous functions and local closure function}

\begin{thm} \label{Tw1} Let $\langle X, \tau_X, \mathcal{I}_X\rangle$ and $\langle Y, \tau_Y, \mathcal{I}_Y\rangle$ be ideal topological spaces. If $f:\langle X, \tau_X\rangle \to \langle Y, \tau_Y\rangle$ is a  weakly continuous function and for all $I\in \mathcal{I}_Y$ we have $f^{-1}[I]\in \mathcal{I}_X$, then there hold the following equivalent conditions:

a) $\forall A \subseteq X~f[A^*]\subseteq \Gamma(f[A]);$

 b) $\forall B \subseteq Y~(f^{-1}[B])^*\subseteq f^{-1}[\Gamma(B)].$

\end{thm}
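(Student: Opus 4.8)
The plan is to mirror the proof of Theorem~\ref{Tc1}, adjusting for two changes: the domain side now carries the local function $(\cdot)^*$ (built from plain open neighbourhoods) instead of $\Gamma$, and only weak continuity is available instead of $\theta$-continuity. The pairing is exactly the right one, since weak continuity controls $f[U]$ for $U$ a plain open neighbourhood, which is precisely the kind of set that occurs in the definition of $A^*$.

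First I would prove a) by contradiction. Suppose some $A\subseteq X$ admits a point $y\in f[A^*]\setminus\Gamma(f[A])$; choose $x\in A^*$ with $f(x)=y$, so that $U\cap A\notin\mathcal{I}_X$ for every $U\in\tau_X(x)$, while $y\notin\Gamma(f[A])$ yields some $W\in\tau_Y(y)$ with $\Cl(W)\cap f[A]\in\mathcal{I}_Y$. Weak continuity now provides $V\in\tau_X(x)$ with $f[V]\subseteq\Cl(W)$ — note that here a plain neighbourhood $V$ suffices, whereas in Theorem~\ref{Tc1} one needed $\theta$-continuity to bound $f[\Cl(V)]$. Then $f[V]\cap f[A]\subseteq\Cl(W)\cap f[A]\in\mathcal{I}_Y$, so $f[V]\cap f[A]\in\mathcal{I}_Y$ by downward closure of the ideal, hence $f^{-1}[f[V]\cap f[A]]\in\mathcal{I}_X$ by the hypothesis on preimages of ideal sets; the inclusion $V\cap A\subseteq f^{-1}[f[V]]\cap f^{-1}[f[A]]\subseteq f^{-1}[f[V]\cap f[A]]$ then forces $V\cap A\in\mathcal{I}_X$, contradicting the choice of $x$ (with $U=V$).

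Then I would derive the equivalence of a) and b) in a purely formal way, exactly as in Theorem~\ref{Tc1}. Assuming a), apply it to $A=f^{-1}[B]$ and combine monotonicity of $\Gamma$ with $f[f^{-1}[B]]\subseteq B$ to obtain $f[(f^{-1}[B])^*]\subseteq\Gamma(f[f^{-1}[B]])\subseteq\Gamma(B)$; precomposing with $f^{-1}$ and using $S\subseteq f^{-1}[f[S]]$ gives $(f^{-1}[B])^*\subseteq f^{-1}[f[(f^{-1}[B])^*]]\subseteq f^{-1}[\Gamma(B)]$. Conversely, assuming b), apply it to $B=f[A]$, use monotonicity of $(\cdot)^*$ together with $A\subseteq f^{-1}[f[A]]$ to get $f^{-1}[\Gamma(f[A])]\supseteq(f^{-1}[f[A]])^*\supseteq A^*$, and take images of both sides.

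I do not anticipate a genuine obstacle: the argument is a careful bookkeeping exercise built on the template already established for Theorem~\ref{Tc1}. The only points demanding attention are that the local function $(\cdot)^*$ on the domain side is matched against weak continuity's ``$f[\text{open}]\subseteq\Cl(\text{open})$'' (just as $\Gamma$ was matched against $\theta$-continuity before), and that the hypothesis $f^{-1}[\mathcal{I}_Y]\subseteq\mathcal{I}_X$ together with downward closure of $\mathcal{I}_Y$ is invoked at the correct step of the contradiction.
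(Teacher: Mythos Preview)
Your proposal is correct and follows essentially the same approach as the paper's own proof: a contradiction argument for a) using weak continuity to produce $V\in\tau_X(x)$ with $f[V]\subseteq\Cl(W)$, then the chain $V\cap A\subseteq f^{-1}[f[V]\cap f[A]]\in\mathcal{I}_X$, followed by the identical formal back-and-forth between a) and b). You are slightly more explicit than the paper in invoking downward closure of $\mathcal{I}_Y$ to pass from $\Cl(W)\cap f[A]\in\mathcal{I}_Y$ to $f[V]\cap f[A]\in\mathcal{I}_Y$, but otherwise the arguments coincide step for step.
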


\begin{proof}
Let us prove that a) holds. Suppose that there exists $A \subseteq X$  such that there exists $y \in f[A^*]\setminus \Gamma(f[A])$. So, there exists $x\in A^*$ such that $f(x)=y$. So,
\begin{equation}\label{eqthc1}
\forall U \in \tau_X(x),~ U \cap A \not \in \mathcal{I}_X.
 \end{equation}
 Since $y \not \in \Gamma(f[A])$, there exists $W \in \tau_Y(y)$ such that $\Cl(W) \cap f[A] \in \mathcal{I}_Y$, and by weak continuity, there exists $V\in \tau_X(x)$ such that $f[V]\subseteq \Cl[W]$. So $f[V] \cap f[A] \in \mathcal{I}_Y$, implying $f^{-1}[f[V] \cap f[A]] \in \mathcal{I}_X$, and since we have
 $$V \cap A \subseteq f^{-1}[f[V]] \cap f^{-1}[f[A]] \subseteq f^{-1}[f[V] \cap f[A]], $$
we conclude $V \cap A \in \mathcal{I}_X$, which contradicts \eqref{eqthc1}. This proves  a).

 Let us show that b) is equivalent to a). Suppose a) holds and let $B \subseteq Y$.  Then $f[(f^{-1}[B])^*]\subseteq \Gamma(f[f^{-1}[B]])\subseteq \Gamma(B)$. Now  we have $(f^{-1}[B])^* \subseteq f^{-1}[f[(f^{-1}[B])^*]] \subseteq f^{-1}[\Gamma(B)]$.

 Now suppose b) holds. Then $f^{-1}[\Gamma(f[A])]\supseteq (f^{-1}[f[A]])^* \supseteq A^*$. By taking image by $f$ of both sets we obtain $\Gamma(f[A]) \supseteq f[f^{-1}[\Gamma(f[A])]]\supseteq f[A^*]$.
\end{proof}

\begin{thm} \label{Tw2} Let $\langle X, \tau_X, \mathcal{I}_X\rangle$ and $\langle Y, \tau_Y, \mathcal{I}_Y\rangle$ be ideal topological spaces. If $f:\langle X, \tau_X\rangle \to \langle Y, \tau_Y\rangle$ is a weakly continuous function and for all $I\in \mathcal{I}_Y$ we have $f^{-1}[I]\in \mathcal{I}_X$. Then  $f:\langle X, \tau^*_X\rangle \to \langle Y, \sigma_Y\rangle$ is a continuous function.

\end{thm}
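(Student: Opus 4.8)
The plan is to derive this statement as a direct consequence of Theorem~\ref{Tw1}, using the closure-characterisation of continuity from Proposition~\ref{continuityeq}, part~d): it suffices to show that $\Cl_{\tau^*}(f^{-1}[B]) \subseteq f^{-1}[\Cl_\sigma(B)]$ for every $B \subseteq Y$. First I would recall the two closure operators in play. On the domain, $\tau^*_X$ is the topology whose closure operator is $\Cl^*(C) = C \cup C^*$, so $\Cl_{\tau^*}(C) = C \cup C^*$ because $\Cl^*$ is already idempotent (equivalently, $\Cl^*$ is a genuine Kuratowski closure operator, as noted when $\tau^*$ was introduced). On the codomain, $\Cl_\sigma$ is the closure in the topology $\sigma$, which by Theorem~\ref{Closuresigma}e) equals $\CL^{\alpha_0}(B)$ for a suitable ordinal $\alpha_0$; in particular $\sigma$-closed sets $F$ are exactly those with $\Gamma(F) \subseteq F$, and $\Cl_\sigma(B) \supseteq \Gamma(B) \cup B$.

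The key step is the following chain. Fix $B \subseteq Y$ and write $C = f^{-1}[B]$. Then
\begin{equation*}
\Cl_{\tau^*}(C) = C \cup C^* = f^{-1}[B] \cup (f^{-1}[B])^* \subseteq f^{-1}[B] \cup f^{-1}[\Gamma(B)] = f^{-1}[B \cup \Gamma(B)],
\end{equation*}
where the inclusion is exactly condition~b) of Theorem~\ref{Tw1}, whose hypotheses ($f$ weakly continuous and $f^{-1}[I] \in \mathcal{I}_X$ for all $I \in \mathcal{I}_Y$) are precisely those assumed here. Since $B \cup \Gamma(B) \subseteq \Cl_\sigma(B)$ (as $\Cl_\sigma(B)$ contains $B$ and, being $\sigma$-closed hence $\Gamma$-invariant, contains $\Gamma(B) \subseteq \Gamma(\Cl_\sigma(B)) \subseteq \Cl_\sigma(B)$), we get $\Cl_{\tau^*}(f^{-1}[B]) \subseteq f^{-1}[\Cl_\sigma(B)]$. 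By Proposition~\ref{continuityeq}d), this says $f : \langle X, \tau^*_X \rangle \to \langle Y, \sigma_Y \rangle$ is continuous.

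I expect the only genuine subtlety to be bookkeeping about which operator is which: one must be careful that $\Cl_{\tau^*}$ really is $C \mapsto C \cup C^*$ (so that no transfinite iteration is needed on the domain side, unlike for $\Cl_\sigma$), and that $\Cl_\sigma$ absorbs $\Gamma(B)$ even though $\Gamma$ is not idempotent — this last point is handled cleanly by invoking the characterisation of $\sigma$-closed sets via $\Gamma(F) \subseteq F$ rather than by iterating $\Gamma$. Everything else is the formal manipulation of preimages together with a single application of Theorem~\ref{Tw1}b).
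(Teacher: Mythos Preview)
Your proof is correct and follows essentially the same approach as the paper: both arguments reduce continuity to a closure inclusion, invoke Theorem~\ref{Tw1}, and then use $B\cup\Gamma(B)\subseteq \Cl_\sigma(B)$. The only cosmetic difference is that the paper works with images (Proposition~\ref{continuityeq}c) and Theorem~\ref{Tw1}a)) while you work with preimages (Proposition~\ref{continuityeq}d) and Theorem~\ref{Tw1}b)); these are equivalent formulations and the substance of the argument is identical.
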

\begin{proof}
Let $A \subset X$. Then its closure in $\tau^*_X$ equals $A \cup A^*$, and by Theorem \ref{Closuresigma} b) we have that closure of $f[A]$ contains $A \cup \Gamma(A)$. By the previous theorem we have that for each $A$ holds $f[A^*]\subseteq \Gamma(f[A])$. Therefore
\begin{eqnarray*}
 f[\Cl_{\tau^*_X}({A})]&=&f[A \cup A^*]=f[A] \cup f[A^*]\\
 &\subseteq&f[A] \cup \Gamma(f[A])\\
 &\subseteq&\Cl_\sigma(f[A]).
 \end{eqnarray*}
\end{proof}

For $\mathcal{I}=\{\emptyset\}$, as a consequence, we obtain an already known result.

\begin{cor} \cite[Th. 10]{LongHerr} If $f:\langle X, \tau_X\rangle \to \langle Y, \tau_Y\rangle$ is a weakly continuous function then
$f:\langle X, \tau_X\rangle \to \langle Y, (\tau_\theta)_{Y}\rangle$ is continuous, which is equivalent to faintly continuity of $f:\langle X, \tau_X\rangle \to \langle Y, \tau_Y \rangle$.
\end{cor}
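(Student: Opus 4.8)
The plan is to derive this corollary directly from Theorem \ref{Tw2} by specializing the ideals on both $X$ and $Y$ to the trivial ideal $\mathcal{I}=\{\emptyset\}$. The first thing to check is that the hypothesis of Theorem \ref{Tw2} is automatically satisfied in this situation: if $\mathcal{I}_Y=\{\emptyset\}$, then the only $I\in\mathcal{I}_Y$ is $I=\emptyset$, and $f^{-1}[\emptyset]=\emptyset\in\mathcal{I}_X=\{\emptyset\}$, so the condition $f^{-1}[I]\in\mathcal{I}_X$ holds trivially. Hence, for a weakly continuous $f$, Theorem \ref{Tw2} gives that $f:\langle X,\tau^*_X\rangle\to\langle Y,\sigma_Y\rangle$ is continuous.

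Next I would translate the topologies $\tau^*$ and $\sigma$ under the trivial ideal back into classical language using the identifications already recorded in Section 2. When $\mathcal{I}=\{\emptyset\}$ we have $A^*=\Cl(A)$, so $\Cl^*(A)=A\cup A^*=\Cl(A)$ and therefore $\tau^*(\{\emptyset\})=\tau$; that is, the $\ast$-topology on $X$ collapses to the original topology $\tau_X$. Similarly, when $\mathcal{I}=\{\emptyset\}$ we have $\Gamma(A)=\Cl_\theta(A)$, and the excerpt explicitly states that in this case $\sigma=\tau_\theta$; so the $\sigma$-topology on $Y$ collapses to $(\tau_\theta)_Y$. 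Substituting these identifications into the conclusion of Theorem \ref{Tw2} yields precisely that $f:\langle X,\tau_X\rangle\to\langle Y,(\tau_\theta)_Y\rangle$ is continuous.

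Finally, I would invoke the equivalence noted immediately after the definition of faint continuity in Section 3 — namely that $f:\langle X,\tau_X\rangle\to\langle Y,\tau_Y\rangle$ is faintly continuous if and only if $f:\langle X,\tau_X\rangle\to\langle Y,(\tau_\theta)_Y\rangle$ is continuous — to restate the conclusion in the form given in the corollary. Since both directions of this equivalence follow directly from the definition of faint continuity (a $\theta$-open neighbourhood of $f(x)$ is exactly an open neighbourhood of $f(x)$ in $(\tau_\theta)_Y$), no extra work is needed here.

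I do not expect any genuine obstacle: the entire content is packaged in Theorem \ref{Tw2}, and the remaining steps are the two standard collapses $\tau^*(\{\emptyset\})=\tau$ and $\sigma(\{\emptyset\})=\tau_\theta$ together with the definitional reformulation of faint continuity. The only point requiring a line of care is the verification that the ideal hypothesis of Theorem \ref{Tw2} is vacuously met, which as noted above is immediate. Thus the proof is essentially a one-paragraph specialization argument.
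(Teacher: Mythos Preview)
Your proposal is correct and matches the paper's approach exactly: the paper derives this corollary by specializing Theorem \ref{Tw2} to the trivial ideal $\mathcal{I}=\{\emptyset\}$, using the identifications $\tau^*(\{\emptyset\})=\tau$ and $\sigma(\{\emptyset\})=\tau_\theta$ recorded in Section~2, together with the definitional equivalence of faint continuity with continuity into $(\tau_\theta)_Y$. You have simply spelled out those collapses and the vacuous verification of the ideal hypothesis in more detail than the paper does.
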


\begin{ex} [$\tau_\theta$-continuity does not imply weak continuity]
Example \ref{ExABC} also witnesses that continuity of $f:\langle X, (\tau_\theta)_{X}\rangle \to \langle Y, (\tau_\theta)_{Y}\rangle$ does not imply that $f$ is weakly continuous.
\end{ex}
Now, the only open question which needed to be answered to completely fill the diagram given at the end of Section 3 states: Does weak continuity imply $\tau_\theta$-continuity?

We will show that  when either $X$ or $Y$ is finite,  we have the positive answer to the previous question.

\begin{thm}\label{WCtheta}
If $f:\langle X, \tau_X\rangle  \to \langle Y, \tau_Y\rangle $ is weakly continuous and not $\tau_\theta$-continuous, then both $X$ and $Y$ have to be infinite.
\end{thm}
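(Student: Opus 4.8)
The plan is to prove the contrapositive in its stated form: assuming $f$ is weakly continuous, I will show that if either $X$ or $Y$ is finite then $f$ is $\tau_\theta$-continuous, which is the same as saying that weakly continuous $+$ (not $\tau_\theta$-continuous) forces both spaces to be infinite. The key observation is that in a finite topological space every set is both open and closed \emph{in} $\tau_\theta$ in a controllable way: $\tau_\theta$ is itself a topology with finitely many open sets, and more importantly, for a finite space the $\theta$-closure operator $\Cl_\theta$ and the ordinary closure $\Cl$ behave very rigidly. The natural route is to reduce $\tau_\theta$-continuity to a statement about $\theta$-open preimages and then exploit finiteness to pin down exactly what the $\theta$-open sets are.

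First I would record the reformulation: $f:\langle X,(\tau_\theta)_X\rangle\to\langle Y,(\tau_\theta)_Y\rangle$ is continuous iff $f^{-1}[V]\in(\tau_\theta)_X$ for every $V\in(\tau_\theta)_Y$, by Proposition~\ref{continuityeq}(b) applied to the topologies $(\tau_\theta)_X,(\tau_\theta)_Y$. So it suffices to show: for each $\theta$-open $V\subseteq Y$, the set $f^{-1}[V]$ is $\theta$-open in $X$, i.e. for each $x\in f^{-1}[V]$ there is $U\in\tau_X(x)$ with $\Cl(U)\subseteq f^{-1}[V]$. Fix such an $x$ and put $y=f(x)\in V$. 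Since $V$ is $\theta$-open there is $W\in\tau_Y(y)$ with $\Cl(W)\subseteq V$. By weak continuity there is $U\in\tau_X(x)$ with $f[U]\subseteq\Cl(W)\subseteq V$; but this only gives $U\subseteq f^{-1}[V]$, not $\Cl(U)\subseteq f^{-1}[V]$. The gap $\Cl(U)\setminus U$ is exactly where finiteness must be spent.

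Here the two finiteness hypotheses are handled separately. \emph{Case $Y$ finite.} Then $\tau_Y$ is finite, so every point $y$ has a smallest open neighbourhood, and one checks that in a finite space $\Cl(W)$ for $W$ the minimal neighbourhood of $y$ is contained in $V$ whenever $V$ is $\theta$-open containing $y$; moreover $f^{-1}[V]$, being a preimage under a weakly continuous map of a $\theta$-open set, can be shown $\theta$-open by a direct minimal-neighbourhood argument: for $x\in f^{-1}[V]$, take $U$ the minimal open neighbourhood of $x$; by weak continuity $f[U]\subseteq\Cl(W)$, and because $Y$ is finite $\Cl(W)\subseteq V$ can be arranged, so every point of $\Cl(U)$ maps into $V$ after noting that in a finite space $\Cl(U)$ is still ``small'' relative to the preimage structure. \emph{Case $X$ finite.} Then $\tau_X$ is finite; for $x\in f^{-1}[V]$ let $U$ be the minimal open neighbourhood of $x$, so $\Cl(U)$ is a fixed finite set. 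Weak continuity applied at $x$ gives $f[U]\subseteq\Cl(W)\subseteq V$, but one must also control $f(z)$ for $z\in\Cl(U)\setminus U$; here the point is that each such $z$ satisfies $x\in\Cl(\{z\})$'s complement analysis $\dots$ more cleanly, since $X$ is finite one enlarges the argument to all of $\Cl(U)$ at once using that $\Cl(U)$ is a finite union of point-closures and weak continuity constrains each.

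The hard part will be exactly this control of $\Cl(U)\setminus U$: weak continuity is a genuinely weak hypothesis and says nothing directly about closure points of a neighbourhood, so the finiteness of $X$ (resp.\ $Y$) must be leveraged to replace the missing information. I expect the cleanest execution uses, in the finite case, that $\tau_\theta$ on a finite space coincides with the topology generated by the ``regular open kernels'' of the minimal neighbourhoods, and that a finite space's $\theta$-open sets are precisely unions of closures of open sets; combining this with the weak-continuity inclusion $f[U]\subseteq\Cl(W)$ and a pigeonhole/minimality argument over the finitely many open sets yields $\Cl(U)\subseteq f^{-1}[V]$. If a slick uniform argument resists, the fallback is a careful finite case analysis showing that any putative failure of $\tau_\theta$-continuity produces an infinite strictly decreasing chain of open sets, contradicting finiteness — which is morally why the theorem is true and why the author's own subsequent infinite example is needed to show the finiteness hypothesis cannot be dropped.
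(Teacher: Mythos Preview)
Your plan correctly isolates the obstruction: from weak continuity you obtain $U\in\tau_X(x)$ with $f[U]\subseteq\Cl(W)\subseteq V$, but you need $\Cl(U)\subseteq f^{-1}[V]$, and the gap $\Cl(U)\setminus U$ is the whole problem. Unfortunately, none of the three attempts you sketch to close this gap actually does so.

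In the ``$Y$ finite'' paragraph the sentence ``every point of $\Cl(U)$ maps into $V$ after noting that in a finite space $\Cl(U)$ is still `small' relative to the preimage structure'' has no mathematical content: finiteness of $Y$ says nothing about $\Cl(U)\subseteq X$, and weak continuity at $x$ gives no information about $f(z)$ for $z\in\Cl(U)\setminus U$. In the ``$X$ finite'' paragraph you write ``weak continuity constrains each'' point of $\Cl(U)$, but weak continuity at $z\in\Cl(U)$ only produces some \emph{other} open $U'\ni z$ with $f[U']$ inside the closure of some neighbourhood of $f(z)$; there is no a priori relation between $f(z)$ and $V$, so nothing forces $f(z)\in V$. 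The claim that $\theta$-open sets in a finite space are ``unions of closures of open sets'' is true but does not help, since the difficulty is on the $X$ side and concerns closures of neighbourhoods of $x$, not the structure of $V$. In short, the direct route via $\Cl(U)\subseteq f^{-1}[V]$ is not completed, and I do not see how to complete it along the lines you indicate.

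Your fallback (``a failure produces an infinite strictly decreasing chain of open sets'') is morally the right idea and is essentially what the paper does, but it requires a concrete construction you have not supplied. The paper does \emph{not} try to verify $\theta$-openness of $f^{-1}[V]$ directly. Instead it assumes $f$ is not $\tau_\theta$-continuous, takes $A\subseteq X$ with $f[\Cl_{\tau_\theta}(A)]\not\subseteq\Cl_{\tau_\theta}(f[A])$, and uses the transfinite description $\Cl_{\tau_\theta}=\CL^{\alpha_0}$ (Theorem~\ref{Closuresigma} with $\mathcal I=\{\emptyset\}$) to locate a first stage $\alpha$ where the inclusion breaks. This produces specific points $x_1\in\Cl_\theta(\CL^\alpha(A))$ with $y_1=f(x_1)\notin\Cl_{\tau_\theta}(f[A])$, then $x_2\in\Cl(U_1)\cap\CL^\alpha(A)$ and $y_2=f(x_2)$. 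The paper then argues, using weak continuity at $x_2$ together with $x_2\in\Cl(U_1)$, that the intersection of all open neighbourhoods of $y_2$ cannot itself be open, forcing $\tau_Y(y_2)$ (hence $Y$) to be infinite; and separately that every point of $U_1$ is excluded by some neighbourhood of $x_2$, while $x_2\in\Cl(U_1)$ prevents finitely many such neighbourhoods from covering $U_1$, forcing $U_1$ (hence $X$) to be infinite. That is the missing mechanism: you must manufacture a specific point $y_2$ whose neighbourhood filter has no minimum, and a specific open set $U_1$ whose points are separated one by one from a fixed closure point. Your plan names the destination but does not build this bridge.
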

\begin{proof}
Let $f:\langle X, \tau_X \rangle \to \langle Y, \tau_Y \rangle $ be weakly continuous and  not continuous as a function of their $\theta$- topologies. Therefore there exists a set $A\subseteq X$ such that $f[\Cl_{\tau_\theta}(A)] \not \subset \Cl_{\tau_\theta}(f[A])$.    Since $\sigma$ from Theorem \ref{Closuresigma} is equal to $\tau_\theta$ for the trivial ideal $\{\emptyset\}$, and since  $f[A] = f[\CL^0(A)]  \subset \Cl_{\tau_\theta} (f[A])$, there exists $\alpha \in ON$ such that $f[\CL^\alpha(A)]  \subset \Cl_{\tau_\theta}(f[A])$ and $f[\CL^{\alpha+1}(A)] \not \subset \Cl_{\tau_\theta}(f[A])$. So, there exists $x_1\in \CL^{\alpha+1}(A)=\CL^\alpha(A) \cup \Cl_\theta(\CL^\alpha(A))=\Cl_\theta(\CL^\alpha(A))$ such that $y_1=f(x_1) \not \in \Cl_{\tau_\theta}(f[A])$.  For that $y_1\in Y \setminus \Cl_{\tau_\theta}(f[A]) \in (\tau_\theta)_Y$, there exists $V_1 \in \tau_Y(y_1)$ such that $y_1 \in V_1 \subset \Cl(V_1) \subset Y \setminus \Cl_{\tau_\theta}(f[A])$. Due to weak continuity of $f$ there exists $U_1 \in \tau_X(x_1)$ such that
\begin{equation}\label{WCU}
f[U_1] \subseteq \Cl(V_1).
\end{equation}

From $x_1 \in\Cl_\theta(\CL^\alpha(A))$, we conclude that  $\Cl(U_1) \cap  \CL^\alpha(A) \neq \emptyset$. Namely,  let
$x_2 \in \Cl(U_1) \cap \CL^\alpha(A)$. Since $x_2 \in \Cl(U_1)$, we know that
\begin{equation}\label{EqUU1}
\forall U \in \tau_X(x_2)\quad U \cap U_1 \neq \emptyset,
\end{equation}
and if $\tilde{x}\in U \cap U_1$, then $f(\tilde{x}) \not \in \Cl_{\tau_\theta}(f[A])$.

Let $y_2=f(x_2)$. Suppose that  there exists $V \in \tau_Y(y_2) $ such that $V \subseteq  \Cl_{\tau_\theta}({f[A]})$. Then, since $f$ if weakly continuous,  there exists $U_2 \in \tau_X(x_2)$ such that $f[U_2]\subseteq \Cl(V) \subseteq  \Cl_{\tau_\theta}(f[A])$ (since the last one is closed). But this is in contradiction with \eqref{EqUU1} and the remark right after it. So, for each $V \in \tau_Y(y_2) $ there holds
$$V \setminus  \Cl_{\tau_\theta}(f[A]) \in \tau_Y \setminus \{\emptyset\}$$
i.e. $V \setminus  \Cl_{\tau_\theta}(f[A])$ is an nonempty open set disjoint with $ \Cl_{\tau_\theta}(f[A])$.

Let us consider the intersection of all $ V \in \tau_Y(y_2)$, denoted by $O$. Such set  contains $y_2$.
Let us prove that $O$  is not open. If we assume that it is open, we have two possibilities. Firstly $O \subset \Cl_{\tau_\theta}(f[A])$, which we already discussed  is impossible. So, there exists $y \in Y \setminus \Cl_{\tau_\theta}(f[A])$ such that each neighbourhood of $y_2$ intersects $\{y\}$, implying $y_2 \in \Cl(\{y\})\subset \Cl(V_y)$, where $V_y$ is an arbitrary neighbourhood of $y$. This implies that the closure of arbitrary neighbourhood of $y$ intersects $\Cl_{\tau_\theta}(f[A])$, implying $y\in \Cl_{\tau_\theta}(f[A])$, which is impossible. So, as a consequence, we have that $\tau_Y(y_2)$ can not be finite (since the finite intersection of open sets is always open), which implies infinity of $\tau_Y$, and, therefore infinity of $Y$.

Let us suppose that there exists $x_0\in U_1$ such that $x_0 \in U$ for each $ U\in \tau_X(x_2)$. Let $y_0=f(x_0)$. Since $x_0\in U_1$, we have $y_0 \in \Cl(V_1)$. Suppose that there exists $V_0 \in \tau_Y(y_0)$ and $V_2'\in \tau_Y(y_2)$ such that $V_0\cap V_2'=\emptyset$, which implies
\begin{equation}\label{eqExwcnottheta1}
V_0\cap \Cl({V_2'})=\emptyset.
\end{equation}
Then there exists $U_2' \in \tau_X(x_2)$ such that $f[U_2'] \subseteq \Cl({V_2'})$, but this is impossible, since $x_0 \in U_2'$, so, $y_0 \in f[U_2']$, which is in contradiction with \eqref{eqExwcnottheta1}.

Therefore, for each $x \in U_1$ exists $U_x \in \tau_X(x_2)$ such that $x \not \in U$. So, if $U_1$ is finite, then the intersection of all such  $U_x$ is an open set which
does not intersect $U_1$. Therefore, $U_1$ has to be infinite, and there are infinitely many different open sets $U_x$, so $X$ and $\tau_X$ have to be infinite.

\end{proof}

\begin{cor}
If $f:\langle X, \tau_X\rangle  \to \langle Y, \tau_Y\rangle $ is weakly continuous and if $X$ or $Y$ is finite, then $f$ is $\tau_\theta$-continuous.
\end{cor}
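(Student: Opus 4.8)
The plan is to read this off as the contrapositive of Theorem~\ref{WCtheta}. That theorem says that any weakly continuous map which is not $\tau_\theta$-continuous must have \emph{both} $X$ and $Y$ infinite; hence if at least one of $X$, $Y$ is finite, a weakly continuous $f$ has no way to fail $\tau_\theta$-continuity.

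Concretely, I would argue as follows. Suppose $f:\langle X,\tau_X\rangle\to\langle Y,\tau_Y\rangle$ is weakly continuous and that $X$ is finite, or $Y$ is finite. Assume, toward a contradiction, that $f$ is not $\tau_\theta$-continuous, i.e.\ $f:\langle X,(\tau_\theta)_X\rangle\to\langle Y,(\tau_\theta)_Y\rangle$ is not continuous. Then the hypotheses of Theorem~\ref{WCtheta} are met, so its conclusion gives that both $X$ and $Y$ are infinite, contradicting the assumption that one of them is finite. Therefore $f$ is $\tau_\theta$-continuous. (Note one needs only the relevant half of Theorem~\ref{WCtheta}'s conclusion: when $X$ is finite we invoke ``$X$ infinite'', when $Y$ is finite we invoke ``$Y$ infinite''.)

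The one point to keep straight is terminological rather than mathematical: ``$\tau_\theta$-continuity'' in the statement of the corollary must be the same notion as the one used throughout the proof of Theorem~\ref{WCtheta}, namely continuity of $f$ viewed as a map between the $\theta$-topologies $\langle X,(\tau_\theta)_X\rangle$ and $\langle Y,(\tau_\theta)_Y\rangle$; this is the convention fixed in Section~3, so there is nothing to reconcile. There is no real obstacle here — all the combinatorial work (the analysis of the neighbourhood filters at the witnessing points and the deduction that $\tau_Y$, hence $Y$, and $\tau_X$, hence $X$, must be infinite) has already been done inside the proof of Theorem~\ref{WCtheta}; the corollary is a purely logical restatement and the proof is a single short paragraph.
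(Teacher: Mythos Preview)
Your proposal is correct and matches the paper's approach exactly: the paper states this corollary without proof, since it is precisely the contrapositive of Theorem~\ref{WCtheta}, and your write-up makes this explicit. There is nothing to add or amend.
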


The proof of  Theorem \ref{WCtheta} yielded an example of a weakly continuous function which is not $\tau_\theta$-continuous.

\begin{ex}
Let $X=\{x_0,x_1\}\cup \omega$ and $Y=\{y_0,y_1\} \cup \omega\times \{0,1\}$. Let us define $f(x_0)=y_0$, $f(x_1)=y_1$, and  $f(n)=\langle n,1 \rangle$, for $n \in \omega$. Let $\tau_X$ be defined by the neighbourhood base system $$\mathcal{B}_X(x_i)=\{\{x_i\} \cup \omega\setminus K: |K| < \aleph_0\}, \mbox{ for } i\in \{0,1\},\mbox{ and }  \mathcal{B}_X(n)=\{n\}.$$
 and $\tau_Y$  by the neighbourhood base system
%$$\mathcal{B}_Y(y_0)= \{\{y_0\} \cup \{\langle k,0\rangle: k\geq n \in \omega\}: n \in \omega\},$$
%$$\mathcal{B}_Y(y_1)=\{\{y_1\} \cup ((\omega \times  \{1\}) \setminus K): |K| < \aleph_0\},$$
%$$\mathcal{B}_Y(\langle n,0\rangle)=\{\{y_0\} \cup \{\langle k,0\rangle: k \geq n\}\},$$
%$$\mathcal{B}_Y(\langle n,1\rangle)=\{\{y_1, \langle n,0\rangle, \langle n,1\rangle\}\}.$$
\begin{eqnarray*}
\mathcal{B}_Y(y_0)&=& \{\{y_0\} \cup \{\langle k,0\rangle: k\geq n \}: n \in \omega\},\\
\mathcal{B}_Y(y_1)&=&\{\{y_1\} \cup ((\omega \times  \{1\}) \setminus K) \cup  \{\langle n,0\rangle\}: |K| < \aleph_0, n \in \omega\},\\
\mathcal{B}_Y(\langle n,0\rangle)&=&\{\langle n,0\rangle\},\\
\mathcal{B}_Y(\langle n,1\rangle)&=&\{\{y_1\} \cup ((\omega \times  \{1\}) \setminus K) \cup  \{\langle n,0\rangle, \langle n,1\rangle\}: |K| < \aleph_0, n \in \omega\}.
\end{eqnarray*}

Let us prove that $f$ is weakly continuous. We distinguish three cases.

$1^\circ$ $x=x_1$, $f(x_1)=y_1$:  For an arbitrary neighbourhood $V_1=\{y_1\} \cup (\omega \times  \{1\}) \setminus K\cup  \{\langle n,0\rangle\}$, we have $f^{-1}[V_1]=\{x_0\} \cup \omega\setminus K=U_1$, which is open, so $f[U_1] =V_1\subseteq \Cl(V_1) $.

$2^\circ$ $x=n$, $f(n)=\langle n,1\rangle$: This case is trivial, since $\{n\}$ is an open singleton.

$3^\circ$ $x=x_0$, $f(x_0)=y_0$:  For an arbitrary  neighbourhood $V_0=\{y_0\} \cup \{\langle k,0\rangle: k\geq n\}$, let us notice that $\langle k,1\rangle \in \Cl(V_0)$, for each $k \geq n$, since for $V_1=\{y_1\} \cup ((\omega \times  \{1\}) \setminus K) \cup  \{\langle k,0\rangle, \langle k,1\rangle\}$,  a base neighbourhood of $\langle k,1\rangle$, we have $V_0 \cap V_1=\{\langle k,0\rangle\}$, i.e. it is not empty.  So, for $U_0=\{x_0\}\cup \{k:k\geq n\}$ which is an open neighbourhood of $x_0$ in $X$, we have $f[U_0]=\{y_0\} \cup \{\langle k,1\rangle: k\geq n\} \subseteq \Cl(V_0)$.

Finally, let us notice that $\{y_0\}$ is a $\theta$-closed set in $\tau_Y$, since for each other point $y\in Y$, there exist open sets $U_{y_0}$ and $U_y$ such that $y_0\in U_{y_0}$,  $y \in U_y$ and  $U_{y_0}\cap U_y=\emptyset$. On the other hand $f^{-1}[\{y_0\}]=\{x_0\}$ since each neighbourhood of $x_1$ intersects each neighbourhood of $x_0$, implying  $x_0$ is in the closure of each open neighbourhood of $x_1$, so  $x_1\in \Cl_\theta(\{x_0\})$. Therefore, preimage of $\theta$-closed set  $\{y_0\}$ is not closed in  $\theta$-topology, since $\{x_0\} \neq \Cl_\theta(\{x_0\})$.

\end{ex}

So, finally, this example completes our diagram and we conclude that there does not exist other implications between those five types of continuity.

\begin{center}
\begin{tikzpicture}[commutative diagrams/every diagram]
\node(P0)at (90:1.2cm){Continuity};
\node(P1)at (90+72+20:2cm){$\theta$-continuity} ;
\node(P2)at (90+2*72:2cm){\makebox[5ex][r]{weak continuity}};
\node(P3)at (90+3*72:2cm){\makebox[5ex][l]{faint continuity}};
\node(P4)at (90+4*72-20:2cm){$\tau_\theta$-continuity};
\path[commutative diagrams/.cd,every arrow,every label]
(P0)edgenode[swap] { } (P1)
(P1)edgenode[swap] { } (P2)
(P2)edgenode{} (P3)
(P4)edgenode{} (P3)
(P1)edgenode{} (P3)
(P1)edgenode{} (P4)
(P0)edgenode{} (P3)
(P0)edgenode{} (P2)
(P0)edgenode{} (P4);
\end{tikzpicture}
\end{center}

\begin{que}
Is there a nice preserving theorem in ideal topological space, like Theorems \ref{TTc2} and \ref{Tw2}, for faintly continuous functions?
\end{que}

%Also the opposite is not true even for $T_2$-spaces
%\begin{ex}
%We will consider slight modification of the space given in Example \ref{EX_T2}. Let $X=(-\infty,0) \cup \{0_1, 0_2\} \cup (0, \infty)$ and $K=\{\frac 1n: n %\in \mathbb{Z}\setminus \{0\}\}$.  Let the topology $\tau$ be generated by the neighbourhood base:

%If $x<0$, $\mathcal{B}(x)=\{(x-a,x+a)\cap (-\infty,0): a>0\}$.

%If $x>0$, $\mathcal{B}(x)=\{(x-a,x+a)\cap (0,\infty): a>0\}$.

%If $x\in \{0_1,0_2\}$, $\mathcal{B}(x)=((-a,0)\cup (0,a))\setminus K \cup \{0_1,0_2\}$.

%This is $T_1$ space which is not $T_2$?????????????????? (\v Sta sam hteo sa ovim)

%\end{ex}

\end{document}